\newtheorem{theorem}{Theorem}[section]
\newtheorem{proposition}[theorem]{Proposition}
\newtheorem{lemma}[theorem]{Lemma}
\newtheorem{corollary}[theorem]{Corollary}
\title{Spectral conditions for graph rigidity in the Euclidean plane}
\author{Sebastian M. Cioab\u{a}\thanks{
Department of Mathematical Sciences,
University of Delaware, Newark, DE 19716, USA. E-mail: {\tt cioaba@udel.edu}} ,\,
Sean Dewar\thanks{
Johann Radon Institute for Computational and Applied Mathematics (RICAM), Austrian Academy of Sciences, 4040 Linz, Austria. E-mail: {\tt sean.dewar@ricam.oeaw.ac.at}}\,  and 
Xiaofeng Gu\thanks{
Department of Mathematics, University of West Georgia, Carrollton, GA 30118, USA. E-mail: {\tt xgu@westga.edu}; Corresponding author.}
}
\begin{document}
\date{\today}
\maketitle
\noindent

\begin{abstract}
Rigidity is the property of a structure that does not flex. It is well studied in discrete geometry and mechanics, and has applications in material science, engineering and biological sciences. A bar-and-joint framework is a pair $(G,p)$ of graph $G$ together with a map $p$ of the vertices of $G$ into the Euclidean plane. We view the edges of $(G, p)$ as bars and the vertices as universal joints. The vertices can move continuously as long as the distances between pairs of adjacent vertices are preserved. The framework is rigid if any such motion preserves the distances between all pairs of vertices. In 1970, Laman obtained a combinatorial characterization of rigid graphs in the Euclidean plane. In 1982, Lov\'asz and Yemini discovered a new characterization and proved that every $6$-connected graph is rigid. Combined with a characterization of global rigidity, their proof actually implies that every 6-connected graph is globally rigid. Consequently, if Fiedler's algebraic connectivity is greater than 5, then $G$ is globally rigid. In this paper, we improve this bound and show that for a graph $G$ with minimum degree $\delta\geq 6$, if its algebraic connectivity is greater than $2+\frac{1}{\delta -1}$, then $G$ is rigid and if its algebraic connectivity is greater than $2+\frac{2}{\delta -1}$, then $G$ is globally rigid. Our results imply that every connected regular Ramanujan graph with degree at least $8$ is globally rigid. We also prove a more general result giving a sufficient spectral condition for the existence of $k$ edge-disjoint spanning rigid subgraphs. The same condition implies that a graph contains $k$ edge-disjoint spanning $2$-connected subgraphs. This result extends previous spectral conditions for packing edge-disjoint spanning trees.
\end{abstract}

{\small \noindent {\bf MSC:} 05C50, 52C25, 05C70, 05C40}

{\small \noindent {\bf Key words:} eigenvalue, algebraic connectivity, connectivity, rigidity, redundant rigidity, global rigidity}

\section{Introduction}

In this paper, we consider finite undirected simple graphs. Throughout the paper,
$k$ denotes a positive integer and $G$ denotes a simple graph with vertex set $V(G)$
and edge set $E(G)$. 

Rigidity is the property of a structure that does not flex. Arising from mechanics, rigidity has been studied in discrete geometry and combinatorics 
(see, among others, \cite{GrSS93, Jord16}) and has applications in material science, engineering and biological sciences (see \cite{CSC,GC,ZFBG} for example). A {\bf $d$-dimensional framework} is a pair $(G, p)$, where $G$ is a graph and
$p$ is a map from $V(G)$ to $\mathbb{R}^d$. Roughly speaking, it is a straight line realization
of $G$ in $\mathbb{R}^d$. Two frameworks $(G, p)$ and $(G, q)$ are {\bf equivalent} if
$\|p(u) - p(v) \| = \|q(u) - q(v) \|$ holds for every edge $uv\in E(G)$, where $\|\cdot \|$
denotes the Euclidean norm in $\mathbb{R}^d$. Two frameworks $(G, p)$ and $(G, q)$ 
are {\bf congruent} if $\|p(u) - p(v) \| = \|q(u) - q(v) \|$ holds for every $u, v\in V(G)$.
A framework $(G,p)$ is {\bf generic} if the coordinates of its points are algebraically independent over the rationals. 
The framework $(G, p)$ is {\bf rigid} if there exists an $\varepsilon >0$ such that
if $(G, p)$ is equivalent to $(G, q)$ and $\|p(u) - q(u) \| < \varepsilon$ for every $u\in V(G)$, then $(G, p)$ is congruent to $(G, q)$. 
As observed in \cite{AsRo78}, a generic realization of $G$ is rigid in $\mathbb{R}^d$ if and only if every generic 
realization of $G$ is rigid in $\mathbb{R}^d$.  Hence the generic rigidity can be considered as a property of the underlying graph.
A graph is called {\bf rigid} in $\mathbb{R}^d$ if every/some generic realization of $G$ is rigid in $\mathbb{R}^d$.

A $d$-dimensional framework $(G, p)$ is {\bf globally rigid} if every framework that is equivalent to $(G, p)$ is congruent to $(G, p)$. This can be considered as the framework being rigid with $\epsilon$ being chosen to be as large as you wish. In \cite{GortlerThurstonHealey10} it was proven that if there exists a generic framework $(G,p)$ in $\mathbb{R}^d$ that is globally rigid, then any other generic framework $(G,q)$ in $\mathbb{R}^d$ will also be globally rigid. Following from this, we define a graph $G$ to be {\bf globally rigid} in $\mathbb{R}^d$ if there exists a globally rigid generic framework $(G,p)$ in $\mathbb{R}^d$. A closely related concept to global rigidity is redundant rigidity. A graph $G$ is {\bf redundantly rigid} in $\mathbb{R}^d$ if $G-e$ is rigid in $\mathbb{R}^d$ for every edge $e \in E(G)$. It was proven by Hendrickson \cite{Hendrickson92} that any globally rigid graph in $\mathbb{R}^d$ is $(d+1)$-connected and redundantly rigid in $\mathbb{R}^d$. Hendrickson \cite{Hendrickson92} also conjectured the converse. It can be shown easily that it is true for $d=1$, however the conjecture is not true for $d \geq 3$ \cite{ConnWhit10}. The final case $d=2$ of the conjecture was confirmed to be true by \cite{Conn05} and \cite{JaJo05}. 

In the following, we will focus on rigidity and global rigidity only in $\mathbb{R}^2$, unless otherwise stated.
For a subset $X\subseteq V(G)$, let $G[X]$ be the subgraph of $G$ induced by $X$ and $E(X)$ denote the edge set of $G[X]$.
A graph $G$ is {\bf sparse} if $|E(X)|\le 2|X|-3$ for every $X\subseteq V(G)$
with $|X|\ge 2$. By definition, any sparse graph is simple. If in addition $|E(G)|=2|V(G)|-3$, then $G$ is called {\bf $(2, 3)$-tight}. 
Laman \cite{Lama70} proved that a graph $G$ is rigid in $\mathbb{R}^2$ if $G$ contains a spanning $(2, 3)$-tight subgraph. 
Thus a $(2, 3)$-tight graph is also called a {\bf minimally rigid} graph. In history, it was first discovered by Pollaczek-Geiringer \cite{Poll1927,Poll1932} who made notable progress on the properties of minimally rigid graphs, but her work was forgotten. Laman \cite{Lama70} rediscovered the characterization of minimally rigid graphs in $\mathbb{R}^2$. Since then, a minimally rigid graph is known as a {\bf Laman graph}.

Lov\'{a}sz and Yemini \cite{LoYe82} gave a new characterization of rigid graphs using matroid theory and showed that $6$-connected graphs are rigid. 
In fact they proved that every $6$-connected graph is rigid even with the removal of any three edges.
It implies that $6$-connected graphs are redundantly rigid and thus globally rigid. 
They also constructed infinitely many $5$-connected graphs that are not rigid. 

We will study rigidity and global rigidity from spectral graph theory viewpoint. We describe the matrices and the eigenvalues of our interest below. If $G$ is an undirected simple graph with $V(G)=\{v_1,v_2,\cdots,v_n\}$, its {\bf adjacency matrix} is the $n$ by $n$ matrix $A(G)$ with entries $a_{ij}=1$ 
if there is an edge between $v_i$ and $v_j$ and $a_{ij}=0$ otherwise, for $1\le i,j\le n$. Let $D(G) = (d_{ij})_{1\leq i,j\leq n}$ be the {\bf degree matrix} of $G$, that is, the $n$ by $n$ diagonal matrix with $d_{ii}$ being the degree of vertex $v_i$ in $G$ for $1\le i\le n$. The matrix $L(G)=D(G)-A(G)$ is called the {\bf Laplacian matrix} of $G$. For $1\leq i\leq n$, we use $\mu_i(G)$ to denote the $i$-th smallest eigenvalue of $L(G)$. It is not difficult to see that $\mu_1(G)=0$. The second smallest eigenvalue of $L(G)$, $\mu_2(G)$, is known as the {\bf algebraic connectivity} of $G$. 

Fiedler \cite{Fiedler} proved that the vertex-connectivity of $G$ is at least $\mu_2(G)$. Thus, the theorem of Lov\'asz and Yemini \cite{LoYe82} 
that every $6$-connected graph is (globally) rigid implies that if $\mu_2(G) > 5$, then $G$ is (globally) rigid. 
In this paper, we will improve this sufficient condition to ``$\mu_{2}(G)> 2+ \frac{1}{\delta -1}$'' for rigidity (Corollary~\ref{maincor})
and  to ``$\mu_{2}(G)> 2+ \frac{2}{\delta -1}$'' for global rigidity (Corollary~\ref{glob}).
Actually we obtain more general sufficient spectral conditions for the existence of $k$ edge-disjoint 
spanning rigid subgraphs in Theorem~\ref{eigkrig} and Corollary~\ref{kdisrig}. 

\begin{theorem}\label{eigkrig}
Let $G$ be a graph with minimum degree $\delta(G)\ge 6k$. If 
\\(1) $\displaystyle\mu_{2}(G)>\frac{6k-1}{\delta(G)+1}$,
\\(2) $\displaystyle\mu_{2}(G-u)>\frac{4k-1}{\delta(G-u)+1}$ for every $u\in V(G)$, and
\\(3) $\displaystyle\mu_{2}(G-v-w)>\frac{2k-1}{\delta(G-v-w)+1}$ for every $v,w\in V(G)$,
\\then $G$ contains at least $k$ edge-disjoint spanning rigid subgraphs.
\end{theorem}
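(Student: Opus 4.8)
The plan is to pass from the geometric statement to a purely combinatorial packing condition and then verify that condition using the algebraic connectivity. First I would invoke the generic rigidity matroid $\mathcal{R}_2(G)$ on the edge set $E(G)$, whose rank equals $2n-3$ exactly when $G$ is rigid, where $n=|V(G)|$. A spanning rigid subgraph is precisely a spanning set of $\mathcal{R}_2(G)$, so ``$k$ edge-disjoint spanning rigid subgraphs'' is the assertion that $E(G)$ contains $k$ pairwise disjoint spanning sets of $\mathcal{R}_2(G)$. By Edmonds' matroid-union (spanning-set packing) theorem this holds if and only if $|E(G)\setminus F|\ge k\big(r(E(G))-r(F)\big)$ for every $F\subseteq E(G)$, where $r$ denotes the rank function of $\mathcal{R}_2(G)$.

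Next I would make this explicit using the Lov\'asz--Yemini rank formula $r(F)=\min\sum_i(2|V(F_i)|-3)$ over covers of $F$. Taking $F$ to be the edges lying inside the classes of a vertex partition, and bounding the within-class edge counts by the sparsity bound $2|V_i|-3$ coming from the minimally rigid (i.e. $(2,3)$-tight) spanning subgraphs, the matroid-union inequality reduces to the following partition condition: for every partition $\mathcal{P}=\{V_1,\dots,V_t\}$ of $V(G)$,
\[
e_G(\mathcal{P})\ \ge\ k\big(2a+3b-3\big),
\]
where $e_G(\mathcal{P})$ counts the edges whose endpoints lie in two distinct classes, $a$ is the number of singleton classes, and $b$ the number of classes of size at least two. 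Thus the whole theorem becomes the claim that conditions (1)--(3) force this inequality for every partition.

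To verify it I would argue by contradiction from a partition $\mathcal{P}$ violating the displayed bound and estimate $e_G(\mathcal{P})=\tfrac12\sum_i e_G(V_i,V\setminus V_i)$ class by class, using two complementary lower bounds on a single cut: the isoperimetric estimate $e_G(S,V\setminus S)\ge \mu_2(G)\,|S|\,|V\setminus S|/n$ and the degree estimate $e_G(S,V\setminus S)\ge |S|(\delta(G)-|S|+1)$. A class of size at most $\delta(G)$ is controlled by the degree bound together with $\delta(G)\ge 6k$, while a large class is controlled by the isoperimetric bound; crucially, two classes can each exceed $\delta(G)$ only when $n\ge 2(\delta(G)+1)$, and in that regime $\mu_2(G)>\tfrac{6k-1}{\delta(G)+1}$ gives $e_G(S,V\setminus S)>3k-\tfrac12$, so integrality of edge counts upgrades this to $\ge 3k$. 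This is exactly where the denominator $\delta(G)+1$ and the numerator $6k-1$ are calibrated.

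The three separate hypotheses enter according to the sizes of the classes carrying the deficiency, namely singletons, pairs, and larger classes: deleting the one or two vertices of a critical singleton or pair shifts the estimate to $G-u$ or $G-v-w$, where the required crossing count and the matching spectral threshold each drop by $2k$, which is the source of the numerators $6k-1$, $4k-1$, $2k-1$ and of the vertex-deleted conditions (2) and (3). The main obstacle will be precisely this reduction step: bounding $e_G(\mathcal{P})$ uniformly when a single partition simultaneously contains many classes of different sizes, since neither cut estimate alone survives the summation. I expect to handle this by choosing the counterexample partition extremally so as to confine the deficiency to at most two small classes, after which conditions (1), (2), (3) close the three resulting cases; the case $k=1$ of the same argument shows $r(E(G))=2n-3$, so $G$ is rigid and the matroid-union reduction indeed applies.
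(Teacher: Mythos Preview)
Your reduction step contains a genuine gap. Restricting Edmonds' inequality $|E\setminus F|\ge k\bigl(r(E)-r(F)\bigr)$ to sets $F$ consisting of all edges inside the classes of a vertex partition produces only a \emph{necessary} condition for the packing, not a sufficient one: you would need to check every $F\subseteq E$, and in the rigidity matroid the extremal $F$ is not of this partition form. The correct sufficient partition-type criterion is the one the paper quotes from Gu (Theorem~\ref{parthm}): one must verify, for \emph{every} $Z\subset V(G)$ and every partition $\pi$ of $V(G-Z)$, the inequality
\[
e_{G-Z}(\pi)\ \ge\ k(3-|Z|)n'_0 + 2kn_0 - 3k - n_Z(\pi).
\]
The deletion set $Z$ is essential, and this is exactly where hypotheses (2) and (3) enter: they are the spectral assumptions on $G-Z$ for $|Z|=1$ and $|Z|=2$, applied directly to that criterion. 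They are not, as you suggest, a device for removing a ``critical singleton or pair'' from a bad partition of $G$; that mechanism does not recover the missing $Z$-cases.

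There is also a secondary issue with your spectral tool. The isoperimetric bound $e_G(S,\bar S)\ge \mu_2(G)\,|S|\,|\bar S|/n$ does not give $|\partial(V_i)|\ge 6k$ uniformly: for $|V_i|\approx \delta+1$ and large $n$ it yields only $|\partial(V_i)|\gtrsim (6k-1)(1-\tfrac{\delta+1}{n})$, which can fall well short. The paper instead combines Lemma~\ref{cardi} (a part with small boundary must have at least $\delta(H)+1$ vertices) with the bilinear estimate of Lemma~\ref{LHlem} on $e(V_1,V_i)$ between two such parts. This yields $e(V_1,V_i)\ge 6k-2k|Z|-|\partial(V_i)|$, and summing over the small-boundary parts gives $\sum_i|\partial(V_i)|\ge (6k-2k|Z|)(q-1)$, which is exactly the count needed. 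Your proposed extremal choice of partition ``confining the deficiency to at most two small classes'' is not enough to replace this step.
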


Theorem~\ref{eigkrig} has the following weaker, but neater, corollary.

\begin{corollary}
\label{kdisrig}
Let $G$ be a graph with minimum degree $\delta \ge 6k$. If $$\mu_{2}(G)> 2+ \frac{2k-1}{\delta -1},$$
then $G$ contains at least $k$ edge-disjoint spanning rigid subgraphs.
\end{corollary}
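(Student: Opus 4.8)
The plan is to deduce Corollary~\ref{kdisrig} directly from Theorem~\ref{eigkrig} by showing that the single hypothesis $\mu_2(G) > 2 + \frac{2k-1}{\delta-1}$ already forces all three of the numbered conditions there. Write $\delta = \delta(G) \ge 6k$. The only external ingredient I expect to need is a monotonicity fact for the algebraic connectivity under vertex deletion, namely that removing a set $S$ of vertices lowers $\mu_2$ by at most $|S|$:
\[
\mu_2(G - S) \ge \mu_2(G) - |S|.
\]

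First I would establish this monotonicity for a single deleted vertex and then iterate. For a vertex $u$, let $L'$ be the principal submatrix of $L(G)$ obtained by deleting the row and column indexed by $u$. Cauchy interlacing gives $\mu_2(L') \ge \mu_2(G)$. Since $L(G-u) = L' - B$, where $B$ is the diagonal $0/1$ matrix recording the neighbours of $u$ and hence satisfies $0 \preceq B \preceq I$, Weyl's inequality yields $\mu_2(G-u) \ge \mu_2(L') - 1 \ge \mu_2(G) - 1$. Applying this twice gives $\mu_2(G-v-w) \ge \mu_2(G) - 2$. Alongside this I would record the trivial degree bounds $\delta(G-u) \ge \delta - 1$ and $\delta(G-v-w) \ge \delta - 2$, which are what control the denominators appearing in the three conditions.

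With these in hand, verifying the conditions is a short comparison, and the point is that only one of them is actually tight. For condition~(1), since $\delta+1 > 6k-1$ we have $\frac{6k-1}{\delta+1} < 1 < 2 < \mu_2(G)$. For condition~(2), the monotonicity gives $\mu_2(G-u) \ge \mu_2(G) - 1 > 1$, while $\delta(G-u)+1 \ge \delta \ge 6k > 4k-1$ forces $\frac{4k-1}{\delta(G-u)+1} < 1$, so the condition holds with room to spare. The binding case is condition~(3), where the constant $2$ in the hypothesis is exactly calibrated for the deletion of two vertices: the monotonicity gives
\[
\mu_2(G-v-w) \ge \mu_2(G) - 2 > \frac{2k-1}{\delta-1},
\]
and since $\delta(G-v-w)+1 \ge \delta-1$ we have $\frac{2k-1}{\delta(G-v-w)+1} \le \frac{2k-1}{\delta-1}$, whence condition~(3) follows. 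All three hypotheses of Theorem~\ref{eigkrig} then hold and the conclusion is immediate.

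I do not expect a genuine obstacle: the corollary is essentially Theorem~\ref{eigkrig} repackaged via interlacing, and the whole content is the observation that condition~(3) is the one that matters and that the threshold $2 + \tfrac{2k-1}{\delta-1}$ is chosen precisely so that subtracting $2$ (for the two deleted vertices) leaves exactly $\tfrac{2k-1}{\delta-1}$, which dominates the worst-case right-hand side $\tfrac{2k-1}{\delta(G-v-w)+1}$. The only thing to watch is that the inequalities stay strict where Theorem~\ref{eigkrig} demands it, which is automatic because the hypothesis on $\mu_2(G)$ is itself strict.
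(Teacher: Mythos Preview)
Your proposal is correct and follows essentially the same route as the paper: the paper deduces the corollary from Theorem~\ref{eigkrig} via Lemma~\ref{mu2lem}, whose proof is exactly your argument (Fiedler's interlacing bound $\mu_2(G-S)\ge\mu_2(G)-|S|$ together with the trivial degree bounds, making condition~(3) the binding case). Your verifications of conditions~(1) and~(2) via the crude bound ``$<1$'' are even a bit simpler than the paper's, but the structure is identical.
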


When $k=1$, we obtain the following spectral condition for rigid graphs.
\begin{corollary}
\label{strcor}
Let $G$ be a graph with minimum degree $\delta(G)\ge 6$. If 
\\(1) $\displaystyle\mu_{2}(G)>\frac{5}{\delta(G)+1}$,
\\(2) $\displaystyle\mu_{2}(G-u)>\frac{3}{\delta(G-u)+1}$ for every $u\in V(G)$, and
\\(3) $\displaystyle\mu_{2}(G-v-w)>\frac{1}{\delta(G-v-w)+1}$ for every $v,w\in V(G)$,
\\then $G$ is rigid.
\end{corollary}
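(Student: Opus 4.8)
The plan is to obtain Corollary~\ref{strcor} as the special case $k=1$ of Theorem~\ref{eigkrig}, so that essentially all of the analytic work is already carried out in that theorem and only a short reduction remains. First I would substitute $k=1$ into the hypotheses of Theorem~\ref{eigkrig} and check that they reduce verbatim to those of the corollary: the degree condition $\delta(G)\ge 6k$ becomes $\delta(G)\ge 6$, the numerator $6k-1$ in (1) becomes $5$, the numerator $4k-1$ in (2) becomes $3$, and the numerator $2k-1$ in (3) becomes $1$, while the denominators $\delta(G)+1$, $\delta(G-u)+1$ and $\delta(G-v-w)+1$ are untouched. Hence conditions (1)--(3) of Corollary~\ref{strcor} coincide exactly with conditions (1)--(3) of Theorem~\ref{eigkrig} at $k=1$.

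Applying Theorem~\ref{eigkrig} with $k=1$ then produces (at least) one spanning rigid subgraph $H$ of $G$. The only remaining step is to pass from ``$G$ has a spanning rigid subgraph'' to ``$G$ is rigid.'' This is the standard monotonicity of generic rigidity under edge addition: since $H$ and $G$ have the same vertex set and $E(H)\subseteq E(G)$, adjoining the extra edges of $G$ to $H$ imposes only additional length constraints and therefore cannot destroy rigidity. Equivalently, via Laman's theorem, a spanning $(2,3)$-tight subgraph of $H$ is also a spanning $(2,3)$-tight subgraph of $G$, so $G$ is rigid.

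I do not expect any real obstacle here, as the statement is a direct specialization: all the difficulty resides in Theorem~\ref{eigkrig} itself, namely in combining the three eigenvalue bounds with the Lov\'asz--Yemini count that forces a spanning rigid subgraph. The only point in the corollary that warrants an explicit line is the edge-monotonicity of rigidity invoked above, and this is immediate.
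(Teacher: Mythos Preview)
Your proposal is correct and matches the paper's approach exactly: the paper derives Corollary~\ref{strcor} simply as the $k=1$ instance of Theorem~\ref{eigkrig}, and your explicit remark about edge-monotonicity of rigidity (passing from a spanning rigid subgraph to rigidity of $G$) is the one tacit step the paper leaves to the reader.
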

This result is similar in spirit and motivated by the work of Jackson and Jord\'an~\cite{JaJo09}, in which they proved that
a simple graph $G$ is (globally) rigid if $G$ is $6$-edge-connected, $G-u$ is $4$-edge-connected 
for every vertex $u$ and $G-\{v, w\}$ is $2$-edge-connected for any vertices $v,w\in V(G)$. Corollary~\ref{strcor} involves several conditions and we can show that the condition ``$\mu_{2}(G)>\frac{5}{\delta(G)+1}$'' is essentially best possible. A family of examples will be constructed in a later section.

As before, we can also obtain the following weaker, but easier to state and verify, condition for a graph to be rigid. 
\begin{corollary}
\label{maincor}
Let $G$ be a graph with minimum degree $\delta\ge 6$. If $$\mu_{2}(G)> 2+ \frac{1}{\delta -1},$$
then $G$ is rigid.
\end{corollary}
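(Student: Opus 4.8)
The plan is to deduce Corollary~\ref{maincor} from Corollary~\ref{strcor}, exactly parallel to the way Corollary~\ref{kdisrig} follows from Theorem~\ref{eigkrig}: I will show that the single clean hypothesis $\mu_2(G) > 2 + \frac{1}{\delta-1}$ already forces all three of the numerically weaker conditions appearing in Corollary~\ref{strcor}. (Equivalently, one could observe that Corollary~\ref{maincor} is literally the $k=1$ instance of Corollary~\ref{kdisrig}, since $2+\frac{2k-1}{\delta-1}$ specializes to $2+\frac{1}{\delta-1}$; but verifying the hypotheses of Corollary~\ref{strcor} is the more instructive route and is what ``as before'' should mean here.)

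The key tool I need is a vertex-deletion estimate for algebraic connectivity: for any $S \subseteq V(G)$ with $|S| = s$ one has $\mu_2(G-S) \ge \mu_2(G) - s$, and I only require $s \in \{1,2\}$. I would prove this by combining Cauchy interlacing with Weyl's inequality. Deleting the $s$ rows and columns indexed by $S$ from $L(G)$ yields an $(n-s)\times(n-s)$ principal submatrix $M$, and Cauchy interlacing gives that its second smallest eigenvalue satisfies $\lambda_2(M) \ge \mu_2(G)$. On the other hand $M = L(G-S) + B$, where $B$ is the nonnegative diagonal matrix whose $(v,v)$ entry counts the neighbours of the surviving vertex $v$ lying in $S$; since each such vertex has at most $s$ neighbours in $S$, we have $0 \preceq B \preceq sI$. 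Weyl's inequality then yields $\mu_2(G-S) = \lambda_2(L(G-S)) = \lambda_2(M - B) \ge \lambda_2(M) - s \ge \mu_2(G) - s$.

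With this in hand the three conditions are routine to check, using also the elementary bound $\delta(G-S) \ge \delta(G) - |S|$. For condition (1): since $\delta \ge 6$, the right-hand side satisfies $\frac{5}{\delta+1} \le \frac57 < 2 < \mu_2(G)$. For condition (2): $\mu_2(G-u) \ge \mu_2(G) - 1 > 1 + \frac{1}{\delta-1} > 1$, whereas $\frac{3}{\delta(G-u)+1} \le \frac{3}{\delta} \le \frac12$, so the strict inequality holds. For condition (3): $\mu_2(G-v-w) \ge \mu_2(G) - 2 > \frac{1}{\delta-1}$, while $\frac{1}{\delta(G-v-w)+1} \le \frac{1}{\delta-1}$, so the strict inequality holds again. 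Applying Corollary~\ref{strcor} now gives that $G$ is rigid.

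The computations are entirely elementary, so there is no serious obstacle; the one point that requires care is the interlacing estimate $\mu_2(G-S) \ge \mu_2(G) - |S|$, and in particular pinning down the bound $0 \preceq B \preceq sI$ so that Weyl's inequality applies cleanly. Everything afterwards is comfortable: conditions (1) and (2) carry enormous slack, and only condition (3) is genuinely tight, where the two-unit drop in algebraic connectivity is \emph{exactly} compensated by the $+\frac{1}{\delta-1}$ term built into the hypothesis. This tightness is precisely what one expects, and it matches the claim that the threshold in these results is essentially best possible.
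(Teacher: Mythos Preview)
Your proposal is correct and mirrors the paper's own argument: the paper derives Corollary~\ref{maincor} as the $k=1$ instance of Corollary~\ref{kdisrig}, which in turn is proved via Lemma~\ref{mu2lem} by checking exactly the three conditions of Corollary~\ref{strcor} using the Fiedler interlacing bound $\mu_2(G-S)\ge\mu_2(G)-|S|$ (stated as Theorem~\ref{mu2int}). The only cosmetic difference is that the paper cites this interlacing inequality rather than reproving it, and its numerical verifications are arranged slightly differently, but the substance is identical.
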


Using the same method as used in Theorem~\ref{eigkrig} when $k=1$, we can prove the following similar results for redundant rigidity and global rigidity.
\begin{theorem}
\label{redund}
Let $G$ be a graph with minimum degree $\delta(G)\ge 6$. If 
\\(1) $\displaystyle\mu_{2}(G)>\frac{6}{\delta(G)+1}$,
\\(2) $\displaystyle\mu_{2}(G-u)>\frac{4}{\delta(G-u)+1}$ for every $u\in V(G)$, and
\\(3) $\displaystyle\mu_{2}(G-v-w)>\frac{2}{\delta(G-v-w)+1}$ for every $v,w\in V(G)$,
\\then $G$ is redundantly rigid.
\end{theorem}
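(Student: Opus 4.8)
The plan is to reduce the statement to the edge-connectivity criterion of Jackson and Jord\'an \cite{JaJo09}, following the proof of Theorem~\ref{eigkrig} with $k=1$, and then to upgrade the resulting global rigidity to redundant rigidity using Hendrickson's theorem \cite{Hendrickson92}. The guiding observation is that the three numerators $6,4,2$ in hypotheses~(1)--(3) are exactly the edge-connectivity thresholds that Jackson and Jord\'an require, namely one unit larger in each coordinate than the numerators $5,3,1$ that suffice for plain rigidity in Corollary~\ref{strcor}.

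First I would carry out the spectral step on $G$ itself. Any vertex set $S$ with $1\le |S|\le \delta(G)$ has edge boundary $e(S,\bar{S})\ge |S|(\delta(G)-|S|+1)\ge \delta(G)\ge 6$, so an edge cut of size at most $5$ must have its smaller side of size at least $\delta(G)+1$. Inserting this structural lower bound on the cut into the same Rayleigh-quotient estimate used for Theorem~\ref{eigkrig} contradicts the hypothesis $\mu_2(G)>\frac{6}{\delta(G)+1}$; hence $G$ is $6$-edge-connected. Running the identical argument on $G-u$ and on $G-v-w$---whose minimum degrees and algebraic connectivities are precisely what appear in the denominators and left-hand sides of hypotheses~(2) and~(3)---shows that $G-u$ is $4$-edge-connected for every $u$ and that $G-v-w$ is $2$-edge-connected for all $v,w$.

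These three conclusions are exactly the hypotheses of the Jackson--Jord\'an theorem \cite{JaJo09}, so $G$ is globally rigid, and then Hendrickson's theorem \cite{Hendrickson92} gives that $G$ is redundantly rigid. I would stress that the argument is routed through $G$ itself rather than through the subgraphs $G-e$: to apply a rigidity criterion directly to each $G-e$ one would need $G$ to be $7$-edge-connected, $G-u$ to be $5$-edge-connected, and so on, which is impossible as soon as $\delta(G)=6$, since deleting an edge at a minimum-degree vertex leaves that vertex with degree $5$ and hence a $5$-edge-cut in $G-e$. Proving global rigidity of $G$ and quoting Hendrickson avoids this obstruction, and it explains structurally why the thresholds for redundant rigidity sit one unit above those for rigidity.

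The only real work is in the spectral step, and that is where the difficulty lies: the crude bound $\mu_2(G)\le \frac{e(S,\bar{S})\,n}{|S|(n-|S|)}$ coming from a balanced two-valued test vector is too weak to reach a denominator of $\delta(G)+1$, so one must combine the degree-forced inequality $|S|\ge \delta(G)+1$ with the sharper eigenvalue estimate already developed for Theorem~\ref{eigkrig}, and then check that this estimate applies uniformly to all of the deleted graphs $G-u$ and $G-v-w$. Once that estimate is in hand, the remainder of the proof of Theorem~\ref{redund} is the bookkeeping above together with the two cited theorems.
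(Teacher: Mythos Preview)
Your reduction to Jackson--Jord\'an does not go through: the spectral hypothesis $\mu_2(G)>\frac{6}{\delta(G)+1}$ does \emph{not} force $6$-edge-connectivity, so step~1 of your plan fails. Concretely, take two copies of $K_{d+1}$ joined by a $5$-edge matching. This graph has $\delta=d$, edge-connectivity $5$, and (by a short block-Laplacian computation using the symmetric/antisymmetric decomposition) $\mu_2=\frac{(d+3)-\sqrt{(d+3)^2-40}}{2}$; for $d=6$ this is $\frac{9-\sqrt{41}}{2}\approx 1.30>\frac{6}{7}$. So the implication you need is false, not merely unproved. The reason the ``sharper estimate'' from the proof of Theorem~\ref{eigkrig} does not rescue you is that Lemma~\ref{LHlem} applied to a bipartition $(S,\bar S)$ collapses: since $e(S,\bar S)=|\partial S|=|\partial \bar S|$, it only yields $|\partial S|^2>(6-|\partial S|)^2$, i.e.\ $|\partial S|\ge 4$. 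The power of Lemma~\ref{LHlem} in Theorem~\ref{eigkrig} comes from summing over \emph{many} parts of a partition, which is exactly what a single edge cut does not provide.

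The paper instead bypasses edge-connectivity entirely. It applies Theorem~\ref{parthm} to $G-f$ and shows directly that for every $Z$ and every partition $\pi$ of $V(G-Z)$ one has $e_{G-Z}(\pi)\ge (3-|Z|)n'_0+2n_0-2-n_Z(\pi)$, which is one unit stronger than the bound used for Corollary~\ref{strcor} and therefore survives the deletion of $f$. The argument is the $k=1$ case of the proof of Theorem~\ref{gzeig} with the threshold $6k-2k|Z|-1$ replaced by $6-2|Z|$; the only new wrinkle is a separate treatment of the case $n'_0\le 1$. So the role of the numerators $6,4,2$ is not to certify edge-connectivities $6,4,2$, but to buy exactly the extra ``$+1$'' in the partition inequality that absorbs the deleted edge.
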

\begin{corollary}
\label{glob}
Let $G$ be a graph with minimum degree $\delta\ge 6$. If $$\mu_{2}(G)> 2+ \frac{2}{\delta -1},$$
then $G$ is globally rigid.
\end{corollary}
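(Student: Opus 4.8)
The plan is to derive Corollary~\ref{glob} from Theorem~\ref{redund} together with the planar characterization of global rigidity, namely that a graph on at least $4$ vertices is globally rigid in $\mathbb{R}^2$ if and only if it is $3$-connected and redundantly rigid (the $d=2$ case of Hendrickson's conjecture, confirmed in \cite{Conn05,JaJo05}). Thus I must extract two things from the single hypothesis $\mu_2(G) > 2 + \frac{2}{\delta-1}$: redundant rigidity and $3$-connectivity.

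For redundant rigidity, I would show that the hypothesis forces all three conditions of Theorem~\ref{redund}. The key tool is a Laplacian interlacing bound under vertex deletion. Writing $L(G-u)$ as $B - N$, where $B$ is the principal submatrix of $L(G)$ obtained by deleting the row and column of $u$ and $N$ is the diagonal $0/1$ matrix recording the neighbors of $u$, Cauchy interlacing gives $\lambda_i(B) \ge \mu_i(G)$, while Weyl's inequality together with $0 \preceq N \preceq I$ gives $\lambda_i(B-N) \ge \lambda_i(B) - 1$. Hence $\mu_2(G-u) \ge \mu_2(G) - 1$, and iterating the same estimate yields $\mu_2(G-v-w) \ge \mu_2(G) - 2$. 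With these in hand the three conditions are routine: condition (1) holds since $\mu_2(G) > 2 > \frac{6}{\delta+1}$; condition (2) holds since $\mu_2(G-u) \ge \mu_2(G) - 1 > 1 > \frac{4}{\delta} \ge \frac{4}{\delta(G-u)+1}$, using $\delta(G-u) \ge \delta - 1$; and condition (3) holds since $\mu_2(G-v-w) \ge \mu_2(G) - 2 > \frac{2}{\delta-1} \ge \frac{2}{\delta(G-v-w)+1}$, using $\delta(G-v-w) \ge \delta - 2$. Theorem~\ref{redund} then gives that $G$ is redundantly rigid. Note that condition (3) is exactly where the threshold $2 + \frac{2}{\delta-1}$ is used tightly, which explains its appearance in the statement.

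For $3$-connectivity, since $\mu_2(G) > 2 + \frac{2}{\delta-1} > 2$, Fiedler's bound $\kappa(G) \ge \mu_2(G)$ gives $\kappa(G) > 2$, so $\kappa(G) \ge 3$ (the excluded complete-graph case is harmless, as $\delta \ge 6$ forces $K_n$ with $n \ge 7$, which is $3$-connected and globally rigid). Combining $3$-connectivity with redundant rigidity and the planar characterization yields that $G$ is globally rigid. I expect the only real work to be establishing and correctly applying the interlacing estimates $\mu_2(G-u) \ge \mu_2(G)-1$ and $\mu_2(G-v-w) \ge \mu_2(G)-2$; once these are secured, the verification of the three conditions and the final combination are purely arithmetic, mirroring exactly how Corollary~\ref{maincor} is obtained from Corollary~\ref{strcor}.
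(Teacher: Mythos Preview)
Your proposal is correct and follows essentially the same route as the paper: verify the three hypotheses of Theorem~\ref{redund} via the interlacing inequality $\mu_2(G-S)\ge \mu_2(G)-|S|$ (stated in the paper as Theorem~\ref{mu2int}, which you re-derive), then invoke Fiedler's bound for $3$-connectivity and the Connelly/Jackson--Jord\'an characterization (Theorem~\ref{globthm}). The paper packages the verification of the three conditions as Lemma~\ref{mu2again}, but the arithmetic is the same as yours; your extra care about the complete-graph exception in Fiedler's inequality is a nice touch the paper omits.
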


In the next section, we will present some preliminaries that will be used in our proofs. The proofs of the main results will be presented in Section 3. 
A family of examples will be constructed in Section 4 to show the best possible bound of $\mu_2(G)$ in Corollary~\ref{strcor}.
In the last section, we will make some concluding remarks and give some applications of our results on rigidity and global rigidity of pseudo-random graphs and Ramanujan graphs, as well as a spectral sufficient condition for the existence of edge-disjoint spanning 2-connected graphs. Some questions are posted.

\section{Preliminaries}
 
The following theorem obtained by Lov\'asz and Yemini \cite{LoYe82} plays a very important role in graph rigidity in $\mathbb{R}^2$.
\begin{theorem}[Lov\'asz and Yemini \cite{LoYe82}]\label{LYthm}
A graph $G$ is rigid if and only if $\sum_{X\in\mathcal{G}}(2|V(X)|-3)\ge 2|V(G)| -3$ 
for every collection $\mathcal{G}$ of induced subgraphs of $G$ whose edges partition $E(G)$.
\end{theorem}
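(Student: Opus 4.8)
The plan is to recast the statement as a rank computation in the two-dimensional generic rigidity matroid and then to establish the underlying rank formula. First I would combine Laman's theorem with the standard fact (from \cite{AsRo78}) that a generic framework $(G,p)$ is rigid in $\mathbb{R}^2$ exactly when its rigidity matrix has rank $2|V(G)|-3$; this identifies the rigid graphs on $n=|V(G)|$ vertices with those whose \emph{generic rigidity matroid} $\mathcal{R}_2(G)$, defined on the ground set $E(G)$, has full rank $2n-3$. By Laman's characterization, a set $F\subseteq E(G)$ is independent in $\mathcal{R}_2(G)$ if and only if $|F'|\le 2|V(F')|-3$ for every nonempty $F'\subseteq F$, where $V(F')$ denotes the vertex set spanned by $F'$. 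Writing $r$ for the rank function, one always has $r(E(G))\le 2n-3$, so $G$ is rigid if and only if $r(E(G))=2n-3$. Since the trivial partition $\mathcal{G}=\{G\}$ already yields the value $2n-3$, the theorem reduces to the single rank identity
\[
r(E(G)) \;=\; \min_{\mathcal{G}}\;\sum_{X\in\mathcal{G}}\bigl(2|V(X)|-3\bigr),
\]
the minimum being over all partitions of $E(G)$ into edge sets of subgraphs: indeed $r(E(G))=2n-3$ then holds precisely when no partition beats the value $2n-3$, which is the asserted inequality.

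The easy inequality ``$\le$'' follows from subadditivity of the matroid rank function: for any partition $\{E_X\}_{X\in\mathcal{G}}$ one has $r(E(G))\le\sum_X r(E_X)$, and each block satisfies $r(E_X)\le 2|V(X)|-3$ because, by the sparsity bound applied with $F'=E_X$, no independent subset of $E_X$ can exceed $2|V(X)|-3$ elements.

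The substance of the argument is the reverse inequality ``$\ge$'', which I expect to be the main obstacle. The goal here is, starting from a basis (a maximum sparse edge set) $B$ of $\mathcal{R}_2(G)$, to exhibit a partition $\mathcal{G}$ with $\sum_{X\in\mathcal{G}}(2|V(X)|-3)=|B|=r(E(G))$. The natural tool is the submodularity of the set function $F\mapsto 2|V(F)|$ together with an uncrossing argument: the family of \emph{tight} edge sets (those $F$ with $|F\cap B|=2|V(F)|-3$) is, after excluding the degenerate small cases, closed under taking unions and intersections whenever two tight sets share at least two vertices, and this stability lets one assemble a cover of $E(G)$ by maximal tight subgraphs whose counts add up to exactly $|B|$. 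Equivalently, one may reach the same formula purely matroid-theoretically: realize the $(2,2)$-sparsity matroid as the union of two copies of the cycle matroid of $G$, so that its rank is given by the Nash--Williams covering formula, and then obtain $\mathcal{R}_2(G)$ as its Dilworth truncation, which converts the $2|V(F)|-2$ counts into the $2|V(F)|-3$ counts and produces the minimization over partitions. Either route requires careful bookkeeping of overlapping tight sets; once the optimal partition is produced, combining the two inequalities yields the rank formula and hence the theorem.
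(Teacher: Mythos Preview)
The paper does not prove this statement. Theorem~\ref{LYthm} is quoted in the Preliminaries section as a result of Lov\'asz and Yemini and is invoked only as a black box---once in Section~4 to certify that the example graphs $\mathcal{H}_d$ are not rigid. There is therefore no proof in the paper to compare your proposal against.

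That said, your sketch is a faithful outline of the original Lov\'asz--Yemini argument: they too reduce the characterization to a rank formula for the $2$-dimensional generic rigidity matroid and obtain that formula by realizing the matroid as the Dilworth truncation of the union of two graphic matroids, which is exactly your second route for the ``$\ge$'' direction. One small point to tighten: the theorem as stated here is phrased for collections of \emph{induced} subgraphs whose edge sets partition $E(G)$, whereas your reduction to the rank identity is written for arbitrary edge partitions. The two minima coincide because the optimal partition produced by the truncation/uncrossing argument---the maximal rigid components of $G$---consists of induced subgraphs that pairwise share at most one vertex; you should make this explicit when matching your rank identity to the stated form of the theorem.
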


A characterization of global rigidity in $\mathbb{R}^2$ came from the combination of a result of Connelly~\cite{Conn05} and a result of
Jackson and Jord\'an~\cite{JaJo05}.
\begin{theorem}[Connelly~\cite{Conn05}, Jackson and Jord\'an~\cite{JaJo05}]
\label{globthm}
A graph $G$ is globally rigid if and only if $G$ is 3-connected and redundantly rigid, or $G$ is a complete graph on at most three vertices.
\end{theorem}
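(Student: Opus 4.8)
The plan is to prove the two implications separately, disposing of the complete graphs on at most three vertices at once: a framework on at most three vertices is determined up to congruence by its pairwise distances, so $K_1,K_2,K_3$ are trivially globally rigid. Henceforth I assume $|V(G)|\ge 4$ and aim to show that $G$ is globally rigid if and only if $G$ is $3$-connected and redundantly rigid.

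For necessity I would reprove Hendrickson's theorem that a globally rigid graph is $3$-connected and redundantly rigid. Suppose first that $G$ has a $2$-separator $\{u,v\}$ splitting $V(G)$ into parts $A$ and $B$ with no edges between them. For a generic realization $(G,p)$, reflecting the points of $A$ across the line through $p(u)$ and $p(v)$ while fixing the points of $B\cup\{u,v\}$ preserves every edge length but alters distances between $A$ and $B$; generically the resulting framework is not congruent to $(G,p)$, contradicting global rigidity, so $G$ must be $3$-connected. For redundant rigidity, suppose $G-e$ with $e=xy$ is flexible. Then the fiber of the rigidity map through $p$ for $G-e$ is positive-dimensional modulo congruences, and on it the length $\|q(x)-q(y)\|$ cannot be constant, since otherwise the flex would preserve $e$ as well and $G$ itself would be flexible, contradicting that global rigidity implies rigidity. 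A continuity/intermediate-value argument then yields a realization $q$, noncongruent to $p$, with $\|q(x)-q(y)\|=\|p(x)-p(y)\|$; this $q$ satisfies all edge constraints of $G$ and is equivalent but not congruent to $(G,p)$, a contradiction.

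The sufficiency direction is the substantial one, and the key tool is Connelly's sufficient condition: a generic $d$-dimensional framework on $n$ vertices carrying an equilibrium stress whose stress matrix has the maximum possible rank $n-d-1$ is globally rigid. Because global rigidity of generic frameworks is a property of the graph alone (recalled in the excerpt, following Gortler, Healey and Thurston), it suffices to exhibit, for each $3$-connected redundantly rigid graph $G$ on $n\ge 4$ vertices, a single generic planar framework admitting an equilibrium stress of stress-matrix rank $n-3$.

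The remaining work is combinatorial, and it is where I expect the main difficulty. I would build the class of $3$-connected redundantly rigid graphs inductively from a base graph such as $K_4$ using rigidity-preserving operations, principally edge additions and the $1$-extension (vertex-split) move, and then show that a maximum-rank stress matrix can be maintained along each step. Edge additions are comparatively benign: averaging a maximum-rank stress with a generic stress supported on the new edge (Connelly's perturbation idea) keeps the rank from dropping. The genuinely delicate point is the $1$-extension step, where one must construct on the enlarged graph a new equilibrium stress whose matrix again attains the extremal rank $n-3$; controlling this rank under the operation, simultaneously with establishing the inductive graph-theoretic construction theorem for $3$-connected redundantly rigid graphs in the first place, is the technical core of the arguments of Jackson and Jord\'an and of Connelly, and would be the hardest part of the proof to carry out.
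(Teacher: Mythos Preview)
The paper does not prove this theorem at all: it is stated in the Preliminaries section as a cited result of Connelly and of Jackson and Jord\'an, and is subsequently used as a black box (for instance in the proof of Corollary~\ref{glob}). So there is no ``paper's own proof'' to compare against; your proposal is addressing a different task than anything the authors attempt.

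That said, your sketch is a faithful outline of how the result is actually established in the cited sources. The necessity direction via Hendrickson's reflection and flexing arguments is correct in spirit, and for sufficiency the combination of Connelly's stress-matrix criterion with the Jackson--Jord\'an inductive construction of $3$-connected redundantly rigid graphs from $K_4$ by edge additions and $1$-extensions is indeed the route taken in the original papers. Your identification of the $1$-extension step (maintaining maximum stress-matrix rank) and the underlying ear-type decomposition theorem for $M$-connected rigidity matroids as the technical heart is accurate. If you intend to write this up in full, be aware that the Jackson--Jord\'an argument works in the setting of $M$-connected graphs (connectivity of the rigidity matroid), which they show is equivalent to redundant rigidity for $3$-connected graphs; the inductive construction is proved at that level rather than directly for redundantly rigid graphs, and this distinction matters for the details.
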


Sufficient conditions for the existence of edge-disjoint spanning rigid subgraphs have also been well studied. 
Jord\'an \cite{Jord05} showed that every $6k$-connected graph contains $k$ edge-disjoint spanning rigid subgraphs. Cheriyan, Durand de Gevigney and Szigeti \cite{ChGS14} proved that a simple graph $G$ contains $k$ edge-disjoint spanning rigid subgraphs if $G-Z$ is $(6k - 2k|Z|)$-edge-connected for every $Z\subset V(G)$. In fact, they proved a stronger result of packing spanning rigid subgraphs and spanning trees in \cite{ChGS14}. The results were extended to a more general case in \cite{Gu17}. Motivated by the spanning tree packing theorem of Nash-Williams~\cite{Nash61} and Tutte~\cite{Tutt61}, the third author \cite{Gu18} obtained a partition condition for packing spanning rigid subgraphs, described below. 

For any partition $\pi$ of $V(G)$, $e_G (\pi)$ denotes the number of edges of $G$ whose ends lie in two different parts of $\pi$. A part of $\pi$ is {\bf trivial} if it consists of a single vertex. Let $Z\subset V(G)$ and $\pi$ be a partition of $V(G-Z)$ with $n_0$ trivial parts $u_1, u_2,\cdots,u_{n_0}$.  We define $n_Z(\pi)$ to be $\sum_{1\le i\le n_0} |Z_i|$ where $Z_i$ is the set of vertices in $Z$ that are adjacent to $u_i$ for $1\le i\le n_0$. If $Z=\emptyset$, then $n_Z(\pi) =0$.
\begin{theorem}[Gu \hspace{1sp}\cite{Gu18}]\label{parthm}
A graph $G$ contains $k$ edge-disjoint spanning rigid subgraphs if for every $Z\subset V(G)$
and every partition $\pi$ of $V(G-Z)$ with $n_0$ trivial parts and $n'_0$ nontrivial parts, 
$$e_{G-Z} (\pi)\ge k(3-|Z|)n'_0 + 2k n_0 - 3k -n_Z(\pi).$$
\end{theorem}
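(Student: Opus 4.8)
The plan is to deduce the statement from the matroid union theorem applied to the two-dimensional generic rigidity matroid $\mathcal{R}_2(G)$, whose rank function I denote by $r$. First I would observe that a spanning rigid subgraph of $G$ is exactly a subset of $E(G)$ of rank $2|V(G)|-3$ in $\mathcal{R}_2$, so that finding $k$ edge-disjoint spanning rigid subgraphs is equivalent to finding $k$ edge-disjoint spanning sets of $\mathcal{R}_2(G)$, and hence (once $G$ is known to be rigid, so that $r(E(G))=2|V(G)|-3$) to finding $k$ disjoint bases of $\mathcal{R}_2(G)$. By Edmonds' matroid union / base-packing theorem, such $k$ disjoint bases exist if and only if
\[
k\bigl(r(E(G))-r(F)\bigr)\le |E(G)\setminus F|\qquad\text{for every }F\subseteq E(G).
\]
Thus the task reduces to showing that the partition inequality in the statement, assumed for all $Z$ and $\pi$, forces this rank inequality for every edge set $F$.

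The bridge between the two is the Lov\'asz--Yemini rank formula, the rank-function form of Theorem~\ref{LYthm}: $r(F)=\min_{\mathcal{X}}\sum_{X\in\mathcal{X}}(2|V(X)|-3)$, the minimum taken over all covers of $F$ by edge sets of induced subgraphs $X$ with $|V(X)|\ge 2$. Given an arbitrary $F$, I would first replace it by its closure, which leaves $r(F)$ unchanged while only shrinking $E(G)\setminus F$, so it suffices to treat closed sets. For a closed $F$ I would fix an optimal cover $\mathcal{X}$ attaining the minimum; a short exchange argument shows that the members of an optimal cover pairwise meet in at most one vertex, since two members sharing two or more vertices could be merged while strictly decreasing the sum. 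From such a cover I would read off the data $(Z,\pi)$: the set $Z$ collects the shared/exceptional vertices, the nontrivial parts of $\pi$ come from the vertex sets of the nonsingleton members after removing $Z$, and the leftover vertices form the trivial parts. A direct computation then rewrites $r(E(G))-r(F)$ in terms of $n'_0$, $n_0$ and $|Z|$, producing the coefficients $3-|Z|$ and $2$, while the uncovered edges $E(G)\setminus F$ become the crossing edges $e_{G-Z}(\pi)$ together with the edges joining trivial parts to $Z$ — which is exactly where the correction term $n_Z(\pi)$ enters.

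The main obstacle is this last translation. Keeping track of how the overlaps among cover members are absorbed into the single set $Z$, and of how singleton parts and their edges into $Z$ are charged, is delicate; in particular, obtaining the coefficient $3-|Z|$ (rather than $3-2|Z|$, which is what a naive ``add $Z$ to every part'' accounting produces) requires not charging $Z$ separately to each part. A secondary point that must be settled is that the partition hypothesis already forces $G$ to be rigid, i.e. $r(E(G))=2|V(G)|-3$, so that the reduction to base packing is legitimate; I would establish this by feeding an optimal cover witnessing non-rigidity back into the partition inequality (via Theorem~\ref{LYthm}) and deriving a contradiction. Finally I would verify the boundary instances — for example, $Z=\emptyset$ together with the all-singleton partition should recover the edge-count bound $|E(G)|\ge k\,(2|V(G)|-3)$ — to confirm that the inequality is tight where expected.
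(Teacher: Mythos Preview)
The paper does not prove this statement: Theorem~\ref{parthm} is quoted from \cite{Gu18} and used only as a black box in the proofs of Theorems~\ref{eigkrig} and~\ref{l:1}. There is therefore no argument in the present paper to compare your proposal against.

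For what it is worth, your outline---Edmonds' base-packing in the rigidity matroid $\mathcal{R}_2(G)$ combined with the Lov\'asz--Yemini rank formula---is the standard and correct route to results of this shape, and it is the method behind \cite{Gu18}. The reduction to closed $F$, the pairwise-one-vertex structure of an optimal cover, and the remark that the hypothesis with $Z=\emptyset$ already forces $G$ to be rigid via Theorem~\ref{LYthm} are all sound. The step you yourself flag as the obstacle---manufacturing a single pair $(Z,\pi)$ from an optimal cover so that the assumed inequality dominates $k\bigl(r(E)-r(F)\bigr)$---is genuinely where the work sits, and your description of $Z$ as ``the shared vertices'' is not yet a proof: when the cover has many pairwise overlaps, the resulting $|Z|$ and the sizes $|V(X)\setminus Z|$ do not line up to give $(3-|Z|)n'_0$ by a direct count, and the edges between $Z$ and the nontrivial parts (some covered, some not) must be charged with care before $n_Z(\pi)$ appears in the right place. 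Filling that bookkeeping in is essentially reproducing the argument of \cite{Gu18}.
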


Now we introduce some useful tools for Laplacian eigenvalues. Fiedler \cite{Fiedler} applied Cauchy interlacing to the Laplacian matrix and obtained the following result (see also \cite[Section 1.7]{BrHa12} and \cite[Thm. 13.5.1]{GoRo01}).
\begin{theorem}[Fiedler \cite{Fiedler}]
\label{mu2int}
If $S$ is a subset of vertices of the graph $G$, then $$\mu_2(G)\le \mu_2(G-S) + |S|.$$
\end{theorem}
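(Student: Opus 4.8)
The plan is to reduce the statement to two classical matrix inequalities applied to the Laplacian $L(G)$: Cauchy interlacing and Weyl's inequality. Write $s=|S|$ and $\bar S = V(G)\setminus S$, and order the vertices of $G$ so that those of $S$ come first. This gives the block decomposition
$$L(G)=\begin{pmatrix} L_{SS} & L_{S\bar S}\\ L_{\bar S S} & L_{\bar S\bar S}\end{pmatrix},$$
in which $L_{\bar S\bar S}$ is the principal submatrix of $L(G)$ indexed by $\bar S$. Since $L_{\bar S\bar S}$ is an $(n-s)\times(n-s)$ principal submatrix of the $n\times n$ symmetric matrix $L(G)$, Cauchy interlacing yields $\mu_i(G)\le \mu_i(L_{\bar S\bar S})$ for every $i\le n-s$; taking $i=2$ gives the first inequality $\mu_2(G)\le \mu_2(L_{\bar S\bar S})$.

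The crucial observation --- and the only genuinely graph-theoretic step --- is that $L_{\bar S\bar S}$ is \emph{not} equal to $L(G-S)$, because deleting $S$ lowers the degree of each vertex of $\bar S$ that had a neighbour in $S$. Precisely, for $u\in\bar S$ the diagonal entry of $L_{\bar S\bar S}$ is $\deg_G(u)$, whereas that of $L(G-S)$ is $\deg_{G-S}(u)=\deg_G(u)-|N_G(u)\cap S|$, while the off-diagonal entries (edges inside $\bar S$) coincide. Hence
$$L_{\bar S\bar S}=L(G-S)+D_S,\qquad D_S=\mathrm{diag}\big(|N_G(u)\cap S|\big)_{u\in\bar S}.$$
The matrix $D_S$ is diagonal and nonnegative, and each of its entries is at most $|S|=s$, so its largest eigenvalue is at most $s$.

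With this in hand I would invoke Weyl's inequality $\mu_2(A+B)\le \mu_2(A)+\lambda_{\max}(B)$ with $A=L(G-S)$ and $B=D_S$, obtaining $\mu_2(L_{\bar S\bar S})\le \mu_2(G-S)+s$. Chaining this with the interlacing bound gives $\mu_2(G)\le \mu_2(L_{\bar S\bar S})\le \mu_2(G-S)+|S|$, as required. As an alternative route one could first establish the single-vertex case $\mu_2(G)\le\mu_2(G-v)+1$, where the correction matrix $D_v$ has entries in $\{0,1\}$, and then iterate over the vertices of $S$.

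I expect the main obstacle to be conceptual rather than computational: one must resist the temptation to compare $L(G)$ and $L(G-S)$ directly by interlacing, since $L(G-S)$ is not a principal submatrix of $L(G)$. The whole argument turns on inserting the intermediate matrix $L_{\bar S\bar S}$, which \emph{is} a principal submatrix, and then accounting separately for the degree correction $D_S$ through Weyl's inequality. A minor point worth noting is the degenerate range $|V(G)|-|S|<2$, where $\mu_2(G-S)$ is undefined and the inequality should be interpreted as vacuous.
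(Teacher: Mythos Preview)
Your argument is correct. The paper does not supply its own proof of this theorem; it merely records that Fiedler obtained the result by applying Cauchy interlacing to the Laplacian matrix and points to \cite{BrHa12,GoRo01} for details, so your proof via interlacing on the principal submatrix $L_{\bar S\bar S}$ together with Weyl's inequality for the degree-correction term $D_S$ is exactly in the spirit of what the paper invokes.
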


For any subset $U\subset V(G)$, $\partial_G (U)$ or simply $\partial (U)$ denotes the set of edges in $G$, 
each of which has one end in $U$ and the other end in $V(G)\backslash U$. We will also need the following result.
\begin{lemma}[Liu et al. \hspace{1sp}\cite{LHGL14}]
\label{LHlem}
Suppose that $X, Y\subset V(G)$ with $X\cap Y =\emptyset$. 
Let $e(X, Y)$ denote the number of edges with one end in $X$ and the other in $Y$.
If $\mu_{2}(G)\ge \max\{\frac{|\partial(X)|}{|X|}, \frac{|\partial(Y)|}{|Y|}\}$,
then $$[e(X, Y)]^2\ge |X|  |Y| \left(\mu_{2}(G)-\frac{|\partial(X)|}{|X|}\right)\left(\mu_{2}(G)-\frac{|\partial(Y)|}{|Y|}\right).$$
\end{lemma}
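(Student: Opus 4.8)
The plan is to use the variational (Rayleigh quotient) characterization of the algebraic connectivity together with a carefully weighted test vector built from the indicator functions of $X$ and $Y$. Recall the quadratic form identity $f^{\top}L(G)f=\sum_{uv\in E(G)}(f_u-f_v)^2$, and that for any vector $f$ orthogonal to the all-ones vector $\mathbf{1}$ the Courant--Fischer theorem gives $f^{\top}L(G)f\ge \mu_2(G)\,\|f\|^2$. The entire argument then consists of feeding one well-chosen $f$ into this inequality and extracting the claimed product bound by a single application of the AM--GM inequality.

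First I would take the test vector $f=|Y|\,\mathbf{1}_X-|X|\,\mathbf{1}_Y$, where $\mathbf{1}_X$ and $\mathbf{1}_Y$ are the indicator vectors of the disjoint sets $X$ and $Y$. The weights are chosen precisely so that $\mathbf{1}^{\top}f=|Y|\,|X|-|X|\,|Y|=0$, which is exactly the orthogonality needed to invoke $\mu_2(G)$, and a direct computation gives $\|f\|^2=|X|\,|Y|\,(|X|+|Y|)$. For the quadratic form I would classify each edge of $G$ according to where its endpoints fall relative to $X$, $Y$ and $V(G)\setminus(X\cup Y)$: an edge of $\partial(X)$ not going to $Y$ contributes $|Y|^2$, an edge of $\partial(Y)$ not going to $X$ contributes $|X|^2$, and an edge between $X$ and $Y$ contributes $(|X|+|Y|)^2$. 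Collecting terms, and using that each cross edge is counted once in $\partial(X)$ and once in $\partial(Y)$, yields
$$ f^{\top}L(G)f = |Y|^2\,|\partial(X)| + |X|^2\,|\partial(Y)| + 2\,|X|\,|Y|\,e(X,Y). $$

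Substituting into $f^{\top}L(G)f\ge \mu_2(G)\|f\|^2$ and rearranging, the boundary contributions pair with the corresponding volume contributions, leaving
$$ 2\,|X|\,|Y|\,e(X,Y)\ \ge\ |Y|^2\bigl(\mu_2(G)|X|-|\partial(X)|\bigr) + |X|^2\bigl(\mu_2(G)|Y|-|\partial(Y)|\bigr). $$
Here the hypothesis $\mu_2(G)\ge\max\{|\partial(X)|/|X|,\,|\partial(Y)|/|Y|\}$ does its essential work: it makes both factors $P:=\mu_2(G)|X|-|\partial(X)|$ and $Q:=\mu_2(G)|Y|-|\partial(Y)|$ nonnegative, so that AM--GM applies to the right-hand side to give $|Y|^2P+|X|^2Q\ge 2|X|\,|Y|\sqrt{PQ}$. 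Dividing by $2|X|\,|Y|$ and squaring then produces $[e(X,Y)]^2\ge PQ$, which is exactly the asserted inequality since $PQ=|X|\,|Y|\bigl(\mu_2(G)-|\partial(X)|/|X|\bigr)\bigl(\mu_2(G)-|\partial(Y)|/|Y|\bigr)$.

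The main obstacle, and the point requiring the most care, is twofold. First, the weights in $f$ must be chosen so that $f$ is simultaneously orthogonal to $\mathbf{1}$ and yields the fully symmetric quadratic form above; one then has to recognize that the weight ratio forced by orthogonality does \emph{not} spoil the final estimate, because the AM--GM step closes the argument for this single admissible $f$ without any need to optimize over the ratio. Second, the edge-counting bookkeeping must be done consistently, since it is precisely the double appearance of the cross edges $e(X,Y)$ in $|\partial(X)|$ and $|\partial(Y)|$ that makes the $\mu_2(G)$ bound collapse to the clean product form. The nonnegativity of $P$ and $Q$ supplied by the hypothesis is likewise indispensable, both for the AM--GM inequality and for the legitimacy of the final squaring.
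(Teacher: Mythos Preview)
Your argument is correct. The paper itself does not prove this lemma; it is quoted from Liu, Hong, Gu and Lai~\cite{LHGL14} and used as a black box, so there is no ``paper's own proof'' to compare against. That said, the test-vector approach you give --- taking $f=|Y|\mathbf{1}_X-|X|\mathbf{1}_Y$, applying the Rayleigh bound $f^\top L(G)f\ge\mu_2(G)\|f\|^2$, and then closing with AM--GM on the two nonnegative terms --- is exactly the standard route to this kind of expander-mixing inequality for the Laplacian, and every step checks out: the orthogonality, the edge bookkeeping (in particular the $(|X|+|Y|)^2-|X|^2-|Y|^2=2|X|\,|Y|$ collapse for the cross edges), and the use of the hypothesis to guarantee $P,Q\ge 0$ before invoking AM--GM and squaring.
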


The following combinatorial lemma is well known. It was used in \cite{GLLY12,KS06} for example. For the sake of completeness, we include a short proof below.
\begin{lemma}\label{cardi}
Let $G$ be a graph with minimum degree $\delta$ and $U$ be a non-empty proper subset of $V(G)$.
If $|\partial(U)| \le \delta -1$, then $|U| \ge \delta + 1$.
\end{lemma}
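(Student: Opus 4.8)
The plan is to prove Lemma~\ref{cardi} by a straightforward counting argument that bounds the total degree of vertices inside $U$ from below (using the minimum degree assumption) and from above (using the small edge boundary).

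First I would sum the degrees of all vertices in $U$. Since $G$ has minimum degree $\delta$, every vertex in $U$ has degree at least $\delta$, so $\sum_{u\in U}\deg_G(u)\ge \delta|U|$. On the other hand, each edge with both endpoints in $U$ contributes $2$ to this sum, while each edge in $\partial(U)$ contributes exactly $1$. Writing $E(U)$ for the set of edges internal to $U$, this gives the exact identity $\sum_{u\in U}\deg_G(u)=2|E(U)|+|\partial(U)|$. Combining the two relations yields $2|E(U)|+|\partial(U)|\ge \delta|U|$.

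Next I would bound $|E(U)|$ from above using the fact that $G$ is a simple graph: the subgraph induced on $U$ has at most $\binom{|U|}{2}$ edges, so $|E(U)|\le \frac{|U|(|U|-1)}{2}$. Substituting and using the hypothesis $|\partial(U)|\le \delta-1$, I obtain
\[
|U|(|U|-1)+(\delta-1)\ge \delta|U|,
\]
which rearranges to $|U|^2-|U|\ge \delta|U|-\delta+1=\delta(|U|-1)+1$, i.e. $|U|(|U|-1)\ge \delta(|U|-1)+1$. Since $U$ is a non-empty proper subset and $|\partial(U)|\le \delta-1<\delta$ forces $U$ to contain more than one vertex (a single vertex would have boundary equal to its degree, at least $\delta$), we have $|U|\ge 2$, so $|U|-1\ge 1$ and I may divide to get $|U|\ge \delta+\frac{1}{|U|-1}>\delta$, whence $|U|\ge\delta+1$.

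I do not anticipate a serious obstacle here; the only point requiring a little care is the edge case handling, namely verifying that $|U|\ge 2$ before dividing by $|U|-1$, which follows because a single isolated vertex in the partition would have $|\partial(U)|=\deg_G(u)\ge\delta$, contradicting the hypothesis. The heart of the argument is simply combining the double-counting identity for the degree sum with the trivial upper bound on internal edges in a simple graph.
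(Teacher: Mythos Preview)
Your proof is correct and follows essentially the same approach as the paper's: both arguments combine the degree-sum lower bound $\sum_{u\in U}\deg_G(u)\ge \delta|U|$ with the simple-graph upper bound $|E(U)|\le \binom{|U|}{2}$ and the hypothesis $|\partial(U)|\le \delta-1$. The paper phrases it as a proof by contradiction (assuming $|U|\le\delta$), while you argue directly and handle the $|U|=1$ case explicitly, but the substance is the same.
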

\begin{proof}[\bf Proof:]
We argue by contradiction and assume that $|U|\le \delta$. Then $|U|(|U|-1)+ |\partial(U)| \ge |U|\delta$ by counting the
total degrees of vertices in $U$. But $|U|(|U|-1)+|\partial(U)| \le \delta(|U|-1)+(\delta-1)\le |U|\delta-1$,
contrary to the fact that $|U|(|U|-1)+|\partial(U)|\ge |U|\delta$. Thus $|U|\ge\delta+1$.
\end{proof}

\section{The proofs of the main results}

In this section, we present the proofs of Theorem~\ref{eigkrig}, Corollary~\ref{kdisrig}, Theorem~\ref{redund} and Corollary~\ref{glob}. We first restate Theorem~\ref{eigkrig} below and present its proof.
\begin{theorem}\label{gzeig}
Let $G$ be a graph with minimum degree $\delta(G)\ge 6k$. If 
$$\mu_{2}(G-Z)>\frac{6k-2k|Z|-1}{\delta(G-Z) +1}$$
for every $Z\subset V(G)$ with $|Z|\le 2$,
then $G$ has at least $k$ edge-disjoint spanning rigid subgraphs.
\end{theorem}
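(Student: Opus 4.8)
The plan is to derive the theorem from Gu's partition criterion (Theorem~\ref{parthm}): it suffices to show that for every $Z\subset V(G)$ and every partition $\pi$ of $V(G-Z)$ with $n_0$ trivial and $n'_0$ nontrivial parts, one has $e_{G-Z}(\pi)\ge k(3-|Z|)n'_0+2kn_0-3k-n_Z(\pi)$. Since the eigenvalue hypotheses are only indexed by $|Z|\le 2$, the first step is to dispose of the case $|Z|\ge 3$ by an elementary degree count. When $|Z|\ge 3$ the term $k(3-|Z|)n'_0$ is nonpositive, so the right-hand side is at most $2kn_0-3k-n_Z(\pi)$. Writing $H=G-Z$ and summing $\deg_G(u_i)\ge\delta\ge 6k$ over the trivial parts $u_1,\dots,u_{n_0}$, while noting that their edges into $Z$ are exactly those counted by $n_Z(\pi)$ and that $\sum_i\deg_H(u_i)\le 2e_H(\pi)$, yields $2e_H(\pi)\ge 6kn_0-n_Z(\pi)$. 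This comfortably dominates the right-hand side, so only $|Z|\le 2$ will require the spectral input.

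For a fixed $Z$ with $|Z|=z\le 2$, I would work in $H=G-Z$ with $\delta'=\delta(H)$ and $n_H=|V(H)|$, calling a part $V_i$ of $\pi$ \emph{small} if $|\partial_H(V_i)|\le 6k-2kz-1$ and \emph{big} otherwise. Because $\delta'\ge\delta(G)-z\ge 6k-2kz$, the threshold obeys $6k-2kz-1\le\delta'-1$, so Lemma~\ref{cardi} forces every small part to have at least $\delta'+1$ vertices; in particular every trivial part is big. The hypothesis $\mu_2(H)>\frac{6k-2kz-1}{\delta'+1}$ is precisely what makes $\mu_2(H)>|\partial_H(V_i)|/|V_i|$ for each small part, so Lemma~\ref{LHlem} (and the Rayleigh-quotient estimate underlying it) becomes available. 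Bounding the boundary of every big part, trivial ones included, below by $6k-2kz$ and summing $2e_H(\pi)=\sum_i|\partial_H(V_i)|$, the target inequality reduces by elementary bookkeeping to a single estimate on the small parts, namely $\sum_{V_i\text{ small}}|\partial_H(V_i)|\ge (6k-2kz)s-6k-(2k-2kz)n_0-2n_Z(\pi)$, where $s$ denotes the number of small parts.

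The decisive step is to establish this lower bound on the total boundary of the small parts, and here I would exploit that a small part, being large, is forced to have large boundary. Applying the Rayleigh-quotient bound to a small part $V_i$ gives $|\partial_H(V_i)|\ge \mu_2(H)\,|V_i|(n_H-|V_i|)/n_H$; combined with $|\partial_H(V_i)|\le 6k-2kz-1$ this confines $|V_i|$ either to the range $[\delta'+1,\,2\delta'+1]$ or to $|V_i|>n_H-2\delta'-2$. At most one part can be of the latter ``almost everything'' type, while every part of the former type has boundary within an additive constant of $6k-2kz$. Summing then contributes essentially $6k-2kz$ per small part, with at most one part sacrificed, which is exactly what the reduced estimate demands.

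I expect the main obstacle to be the constant-tracking in this final count rather than any single conceptual step. The factor $(n_H-|V_i|)/n_H$ degrades the per-part boundary bound below $6k-2kz$ precisely for parts whose size is a constant fraction of $n_H$, and one must verify that the accumulated deficit is absorbed by the slack terms $-6k$, $-(2k-2kz)n_0$ and $-2n_Z(\pi)$, in combination with the exact value of the threshold. Indeed, the threshold $\frac{6k-2kz-1}{\delta'+1}$ is tuned to make Lemma~\ref{cardi} and Lemma~\ref{LHlem} applicable simultaneously while leaving just enough room for the count to close; the regime of a bounded number of small parts, where a naive ``each pair of small parts is joined by at least one edge'' bound is too weak, is exactly where this tuning must be used, and small values of $n_H$, where the asymptotic size estimates are weakest, will need to be checked directly.
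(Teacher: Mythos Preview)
Your framework matches the paper exactly through the reduction: invoke Theorem~\ref{parthm}, dispose of $|Z|\ge 3$ by the degree count on trivial parts, and for $|Z|=z\le 2$ split the parts of $H=G-Z$ into small ($|\partial_H(V_i)|\le 6k-2kz-1$) and big, using Lemma~\ref{cardi} to force $|V_i|\ge\delta(H)+1$ for small parts. Your bookkeeping is also correct: with $n_0=0$ and $z=0$ the required estimate on the small parts is exactly $\sum_{\text{small}}|\partial_H(V_i)|\ge 6k(s-1)$.

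The gap is in the decisive step. You mention Lemma~\ref{LHlem} but then abandon it for the one-set Rayleigh bound $|\partial_H(V_i)|\ge \mu_2(H)\,|V_i|(n_H-|V_i|)/n_H$, and this is too weak to recover the needed constant. Take $z=0$, $n_0=0$, and suppose all $s$ parts are small with $|V_i|=\delta(H)+1$; then $n_H=s(\delta(H)+1)$ and your bound yields only $|\partial_H(V_i)|>(6k-1)(s-1)/s$, hence at best $|\partial_H(V_i)|\ge 6k-1$ for large $s$, giving $\sum|\partial_H(V_i)|\le(6k-1)s<6k(s-1)$ whenever $s>6k$. The slack terms $-(2k-2kz)n_0-2n_Z(\pi)$ vanish here, so there is nothing to absorb the deficit, and your proposed size dichotomy $|V_i|\in[\delta'+1,2\delta'+1]$ versus $|V_i|>n_H-2\delta'-2$ does not in fact follow from the inequalities you wrote down. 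What the paper does instead is use Lemma~\ref{LHlem} \emph{pairwise}: for the small part $V_1$ of least boundary and each other small $V_i$ one gets
\[
e(V_1,V_i)^2>\bigl(6k-2kz-1-|\partial(V_1)|\bigr)\bigl(6k-2kz-1-|\partial(V_i)|\bigr)\ge\bigl(6k-2kz-1-|\partial(V_i)|\bigr)^2,
\]
hence $e(V_1,V_i)\ge 6k-2kz-|\partial(V_i)|$. Summing over $i=2,\dots,s$ and using $|\partial(V_1)|\ge\sum_{i\ge2}e(V_1,V_i)$ telescopes to $\sum_{i=1}^{s}|\partial(V_i)|\ge(6k-2kz)(s-1)$, which is precisely the estimate you were trying to reach. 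The pairwise use of Lemma~\ref{LHlem}, not the single-set Rayleigh inequality, is the missing idea.
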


\begin{proof}
Let $H=G-Z$.
By Theorem~\ref{parthm}, it suffices to show that
for  any partition $\pi$ of $V(H)$ with $n_0$ trivial parts and $n'_0$ nontrivial parts, 
\begin{equation}
\label{ndts}
e_{H} (\pi)\ge k(3-|Z|)n'_0 + 2k n_0 - 3k -n_Z(\pi),
\end{equation}
for every $Z\subset V(G)$.

We first prove that if $|Z|\ge 3$, then (\ref{ndts}) is always true. Actually, for every trivial part (a single vertex) $u_j$,
its degree $d_H (u_j)$ in $H$ must satisfy the inequality $d_H (u_j)\ge \delta(G) - |Z_j| \ge 6k - |Z_j|$, where $Z_j$ is the set of neighbors of $u_j$ in $Z$.
Recall that $n_Z(\pi)=\sum_{1\le j\le n_0}|Z_j|$. If $|Z|\ge 3$, then
\begin{eqnarray*}
e_{H} (\pi)
&\ge& \frac{1}{2}\sum_{1\le j\le n_0} d_H (u_j)\\
&\ge& \frac{1}{2}\sum_{1\le j\le n_0} \delta (G) -\frac{1}{2}\sum_{1\le j\le n_0}|Z_j|\\
&\ge& 3kn_0 - \frac{1}{2} n_Z(\pi)\\
&\ge& k(3-|Z|)n'_0 + 2k n_0 - 3k -n_Z(\pi).
\end{eqnarray*}

We assume that $|Z|\le 2$ from now on. If $V_1, V_2, \cdots, V_{n'_0}$ are the nontrivial parts in the partition $\pi$ of $H$ and
$u_1, u_2, \cdots, u_{n_0}$ are the trivial parts of $\pi$, then
\begin{equation}
\label{tripa}
\sum_{1\le j\le n_0} d_H (u_j)\ge \sum_{1\le j\le n_0}( \delta(G) - |Z_j|) \ge 6kn_0 - n_Z(\pi).
\end{equation}

Without loss of generality, we may assume that $|\partial_H (V_1)|\le |\partial_H (V_2)|\le \cdots \le |\partial_H (V_{n'_0})|$. 
For convenience, we will use $\partial$ for $\partial_{H}$ in the following. If $|\partial(V_2)|\ge 6k-2k|Z|$, then
\begin{eqnarray*}
e_{H} (\pi)
& =  &   \frac{1}{2}\left( \sum_{1\le i\le n'_0} |\partial(V_i)| + \sum_{1\le j\le n_0} d_H (u_j)\right)\\
&\ge &  \frac{1}{2}\left( (6k-2k|Z|)(n'_0-1) + 6kn_0 - n_Z(\pi)\right)\\
&\ge &  k(3-|Z|)n'_0 + 2k n_0 - 3k -n_Z(\pi), 
\end{eqnarray*}
and we are done. Thus, we assume that $n'_0 \ge 2$ and $|\partial(V_2)|\le 6k-2k|Z| -1$.

Let $q$ be the largest index such that $|\partial(V_q)|\le 6k-2k|Z| -1$. Then $2\le q\le n'_0$. 
Therefore,
\begin{equation}
\label{igeq1}
|\partial(V_i)|\ge 6k-2k|Z|  \mbox{ for } q < i\le n'_0,
\end{equation}
whenever such an $i$ exists.

For $1\le i\le q$, since $|\partial(V_i)|\le 6k-2k|Z| -1 \le \delta(G) - 2k|Z| -1 \le \delta(G-Z) -1 =  \delta(H) -1$, 
Lemma~\ref{cardi} implies that $|V_i|\ge \delta(H) +1$. 
As $\mu_{2}(H)>\frac{6k-2k|Z|-1}{\delta(H)+1}$, it follows that
$|V_i| \mu_{2}(H) > 6k-2k|Z|-1$ for $1\le i\le q$. By Lemma~\ref{LHlem}, for $2\le i\le q$,
\begin{eqnarray*}
[e(V_1, V_i)]^2
&\ge& |V_1||V_i|\left(\mu_{2}(H)-\frac{|\partial(V_1)|}{|V_1|}\right)\left(\mu_{2}(H)-\frac{|\partial(V_i)|}{|V_i|}\right)\\
&=& \left(|V_1|\mu_{2}(H)- |\partial(V_1)|\right)\left(|V_i|\mu_{2}(H)- |\partial(V_i)|\right)\\
&>& \left(6k-2k|Z|-1- |\partial(V_1)|\right)\left(6k-2k|Z|-1- |\partial(V_i)|\right)\\
&\ge& \left(6k-2k|Z|-1 - |\partial(V_i)|\right)^2.
\end{eqnarray*}
Thus $e(V_1, V_i) > 6k-2k|Z|-1- |\partial(V_i)|$, and so $e(V_1, V_i) \ge 6k-2k|Z|- |\partial(V_i)|$. We get that 
$$|\partial(V_1)|\ge \sum_{2\le i\le q}e(V_1, V_i) \ge (6k-2k|Z|)(q-1)- \sum_{2\le i\le q} |\partial(V_i)|,$$
and thus
\begin{equation}
\label{ileq}
\sum_{1\le i\le q} |\partial(V_i)|= |\partial(V_1)| + \sum_{2\le i\le q} |\partial(V_i)| \ge (6k-2k|Z|)(q-1).
\end{equation}

Using \eqref{tripa}, \eqref{igeq1} and \eqref{ileq}, we obtain that
\begin{eqnarray*}
e_{H} (\pi)
& =  & \frac{1}{2}\left( \sum_{1\le i\le n'_0} |\partial(V_i)| + \sum_{1\le j\le n_0} d_H (u_j) \right)\\
& =  & \frac{1}{2}\left(\sum_{1\le i\le q} |\partial(V_i)| + \sum_{q< i\le n'_0} |\partial(V_i)| + \sum_{1\le j\le n_0} d_H (u_j)\right)\\
&\ge &  \frac{1}{2}\left( (6k-2k|Z|)(q-1) +(6k-2k|Z|)(n'_0-q) + 6kn_0 - n_Z(\pi) \right)\\
&\ge &  k(3-|Z|)n'_0 + 2k n_0 - 3k -n_Z(\pi), 
\end{eqnarray*}
which completes the proof.
\end{proof}

Corollary~\ref{kdisrig} follows directly from Theorem~\ref{eigkrig} and the following lemma.
\begin{lemma}
\label{mu2lem}
Let $G$ be a graph with minimum degree $\delta \ge 6k$. If 
$\mu_{2}(G)> 2+ \frac{2k-1}{\delta -1}$,
then $\mu_{2}(G-Z)>\frac{6k-2k|Z|-1}{\delta(G-Z) +1}$
for every $Z\subset V(G)$ with $|Z|\le 2$.
\end{lemma}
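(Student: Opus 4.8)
The plan is to treat the three possible values $|Z|\in\{0,1,2\}$ separately, in each case bounding $\mu_2(G-Z)$ from below via Fiedler's interlacing result (Theorem~\ref{mu2int}) and then comparing that lower bound against the target fraction $\frac{6k-2k|Z|-1}{\delta(G-Z)+1}$. The common mechanism is the inequality $\mu_2(G-Z)\ge \mu_2(G)-|Z|$, obtained from Theorem~\ref{mu2int} applied with $S=Z$, combined with the hypothesis $\mu_2(G)>2+\frac{2k-1}{\delta-1}$ and the elementary degree bound $\delta(G-Z)\ge \delta-|Z|$, which holds because deleting a single vertex lowers the degree of any remaining vertex by at most one.

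First I would dispose of the two slack cases. For $|Z|=0$ the target is $\frac{6k-1}{\delta+1}$; since $\delta\ge 6k$ gives $\delta+1>6k-1$, this fraction is smaller than $1$, while the hypothesis already forces $\mu_2(G)>2$. For $|Z|=1$ the target is $\frac{4k-1}{\delta(G-u)+1}$, and here $\delta(G-u)+1\ge\delta\ge 6k>4k-1$, so the fraction is again below $1$, whereas interlacing gives $\mu_2(G-u)\ge\mu_2(G)-1>1+\frac{2k-1}{\delta-1}>1$. Both cases thus reduce to the single observation that the target is strictly less than $1$ while $\mu_2(G-Z)$ exceeds $1$.

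The only case that uses the full strength of the hypothesis is $|Z|=2$, and this is where the constant $2+\frac{2k-1}{\delta-1}$ is calibrated. Interlacing yields $\mu_2(G-v-w)\ge\mu_2(G)-2>\frac{2k-1}{\delta-1}$, so it suffices to show $\frac{2k-1}{\delta-1}\ge\frac{2k-1}{\delta(G-v-w)+1}$. Since $2k-1>0$, this is equivalent to $\delta(G-v-w)+1\ge\delta-1$, i.e.\ $\delta(G-v-w)\ge\delta-2$, which is exactly the degree bound above with $|Z|=2$. Chaining the three inequalities then gives the claim.

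I do not anticipate a serious obstacle: every step is either a direct invocation of Theorem~\ref{mu2int} or an elementary comparison of a rational number with $1$. The one point requiring care is to keep the inequalities strict where they must be, since the hypothesis on $\mu_2(G)$ is strict and $\frac{2k-1}{\delta-1}>0$ for $k\ge 1$, so the final inequality in the $|Z|=2$ case remains strict. It is worth remarking that this last case saturates the bound, which both explains why the weaker hypotheses suffice when $|Z|\le 1$ and foreshadows the sharpness discussion promised later in the paper.
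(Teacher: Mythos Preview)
Your proof is correct and follows essentially the same approach as the paper: both arguments split into the three cases $|Z|\in\{0,1,2\}$, invoke Theorem~\ref{mu2int} to obtain $\mu_2(G-Z)\ge\mu_2(G)-|Z|$, and combine this with $\delta(G-Z)\ge\delta-|Z|$. The only cosmetic difference is that for $|Z|\le 1$ the paper verifies the algebraic inequalities $2+\frac{2k-1}{\delta-1}>\frac{6k-1}{\delta+1}$ and $1+\frac{2k-1}{\delta-1}>\frac{4k-1}{\delta}$ directly, whereas you observe more crudely that the right-hand sides are below $1$; both routes are valid and the substantive content is identical.
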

\begin{proof}
Notice that $\delta(G-u)\ge \delta -1$ for every $u\in V(G)$ and $\delta(G-v-w)\ge \delta -2$ for every $v,w\in V(G)$.
It suffices to show that $\mu_{2}(G)>\frac{6k-1}{\delta+1}, \mu_{2}(G-u)>\frac{4k-1}{\delta}$ and $\mu_{2}(G-v-w)>\frac{2k-1}{\delta -1}$. Because $\delta\geq 6k$, it is not hard to verify that $2+ \frac{2k-1}{\delta -1} > \frac{6k-1}{\delta+1}$ and $1+ \frac{2k-1}{\delta -1} > \frac{4k-1}{\delta}$. Thus $\mu_{2}(G)> 2+ \frac{2k-1}{\delta -1} > \frac{6k-1}{\delta+1}$. By Theorem~\ref{mu2int}, $\mu_{2}(G-u)\ge \mu_{2}(G) -1 > 1+ \frac{2k-1}{\delta -1} >  \frac{4k-1}{\delta}$ and $\mu_{2}(G-v-w)\ge \mu_{2}(G) -2 > \frac{2k-1}{\delta -1}$.
\end{proof}


The proofs of Theorem~\ref{redund} and Corollary~\ref{glob} are quite similar to the above. We first restate Theorem~\ref{redund} as below
and give a quick proof. 
\begin{theorem}\label{l:1}
	Let $G$ be a graph with minimum degree $\delta(G) \geq 6$. If
	\begin{align*}
		\mu_2(G-Z) > \frac{6-2|Z|}{\delta(G-Z) +1}
	\end{align*}
	for every $Z \subset V(G)$ with $|Z| \leq 2$, then $G$ is redundantly rigid.
\end{theorem}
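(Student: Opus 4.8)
The plan is to prove redundant rigidity by showing that $G - e$ is rigid for every edge $e \in E(G)$, and to reduce each such statement to an application of Theorem~\ref{gzeig} (the $k=1$ case) on the graph $G-e$. Recall that a graph is redundantly rigid precisely when removing any single edge leaves a rigid graph, so the whole task is to verify, for an arbitrary edge $e$, that $G-e$ satisfies the hypothesis of Theorem~\ref{gzeig} with $k=1$, namely that $\mu_2((G-e)-Z) > \frac{6-2|Z|-1}{\delta((G-e)-Z)+1}$ for every $Z \subset V(G-e)$ with $|Z| \le 2$. The comparison between the numerator $6-2|Z|$ appearing in Theorem~\ref{l:1} and the numerator $6-2|Z|-1 = 5-2|Z|$ appearing in the rigidity threshold of Theorem~\ref{gzeig} is exactly the extra slack that an edge deletion can consume; this is why the redundant-rigidity hypothesis carries a larger numerator than the plain rigidity hypothesis.

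First I would fix an edge $e$ and a subset $Z$ with $|Z| \le 2$, and control how deleting $e$ affects the two quantities in the threshold inequality. The minimum degree can drop by at most one when we remove $e$, so $\delta((G-e)-Z) \ge \delta(G-Z) - 1$; since the hypothesis already guarantees $\delta(G) \ge 6$, I would check that $\delta(G-e) \ge 6$ is not actually needed (only $\delta(G)\ge 6k = 6$ feeds into the degree bookkeeping inside the proof of Theorem~\ref{gzeig}, via the trivial-part degree estimate $d_H(u_j) \ge \delta(G) - |Z_j|$), and take care to see how the single missing edge enters that estimate. For the eigenvalue, deleting an edge can only decrease $\mu_2$, and by Weyl-type interlacing the drop is bounded; in fact the cleanest route is to bound $\mu_2((G-e)-Z)$ from below using that removing one edge changes the Laplacian by a rank-one positive semidefinite matrix, giving $\mu_2(G'-e') \ge \mu_2(G') - (\text{small constant})$ on the relevant subgraph.

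The cleanest implementation, and the one I expect the authors use, is to mimic the proof of Theorem~\ref{gzeig} directly rather than invoke it as a black box. I would re-run the partition argument on $H = (G-e)-Z$, where the only change from the rigidity proof is that the target inequality from Theorem~\ref{parthm} must be met with $k=1$ for the graph $G-e$, and one edge is missing from the various edge counts $e_H(\pi)$, $|\partial(V_i)|$, and $\sum_j d_H(u_j)$. Deleting $e$ decreases $e_H(\pi)$ by at most one, so I would track that single unit of loss through the three cases (the $|Z|\ge 3$ case, the case $|\partial(V_2)| \ge 6-2|Z|$, and the main case analyzed via Lemma~\ref{LHlem}), absorbing it against the extra $+1$ in the numerator of the redundant-rigidity threshold. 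Concretely, the stronger hypothesis $\mu_2(H) > \frac{6-2|Z|}{\delta(H)+1}$ (versus $\frac{5-2|Z|}{\delta(H)+1}$) pushes the quantity $|V_i|\mu_2(H)$ above $6-2|Z|$ rather than merely above $5-2|Z|$, which upgrades the pairwise edge bound from $e(V_1,V_i) \ge 6-2|Z| - |\partial(V_i)|$ to $e(V_1,V_i) \ge 7-2|Z| - |\partial(V_i)|$, yielding one extra edge per boundary and thereby compensating for the removed edge $e$.

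The main obstacle will be the careful bookkeeping of that single deleted edge across the partition, since $e$ could lie inside a part (in which case it never contributed to $e_H(\pi)$ and costs nothing), or between two parts (in which case it costs exactly one), or be incident to a vertex of $Z$ (in which case it affects the degree sum but is already accounted for in $n_Z(\pi)$). I would organize the argument so that the worst case — $e$ crossing between two nontrivial parts $V_i, V_j$ — is the one where the extra slack from the improved threshold is spent, and verify that in every other placement of $e$ the margin is only more comfortable. Once the inequality \eqref{ndts} with $k=1$ is established for $G-e$ and all $|Z| \le 2$, Theorem~\ref{gzeig} gives that $G-e$ is rigid, and since $e$ was arbitrary, $G$ is redundantly rigid.
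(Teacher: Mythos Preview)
Your overall strategy is right and matches the paper's: re-run the partition argument of Theorem~\ref{gzeig} and absorb the single lost edge against the extra unit in the numerator of the threshold. There is, however, one slip worth flagging. When you set $H = (G-e)-Z$ and then invoke ``the stronger hypothesis $\mu_2(H) > \frac{6-2|Z|}{\delta(H)+1}$'', that is not what you are given: the hypothesis bounds $\mu_2(G-Z)$, and deleting an edge can only push $\mu_2$ downward, so you cannot simply transfer the bound to $(G-e)-Z$. The paper sidesteps this cleanly by observing that $e_{(G-f)-Z}(\pi) \ge e_{G-Z}(\pi) - 1$ (the vertex sets coincide, so $\pi$ is a partition of both) and then proving the slightly stronger target
\[
e_{G-Z}(\pi) \ge (3-|Z|)n'_0 + 2n_0 - 2 - n_Z(\pi),
\]
running Lemmas~\ref{LHlem} and~\ref{cardi} on $G-Z$ itself, where the eigenvalue hypothesis actually applies. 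With that correction your upgraded pairwise bound $e(V_1,V_i) \ge 7-2|Z|-|\partial(V_i)|$ is valid, with all boundaries and edge counts taken in $G-Z$.

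One boundary case you omit: the paper separately handles $n'_0 \le 1$ (where there are no two nontrivial parts for Lemma~\ref{LHlem} to compare) directly from the trivial-part degree sum. This check is short but not automatic, since the target constant has shifted from $-3$ to $-2$.
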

\begin{proof}
We need to show that for any edge $f\in E(G)$, $G-f$ is rigid.
By Theorem~\ref{parthm}, it suffices to show that
for  any partition $\pi$ of $V(G-f-Z)$ with $n_0$ trivial parts and $n'_0$ nontrivial parts, 
\begin{equation*}
e_{G-f-Z} (\pi)\ge (3-|Z|)n'_0 + 2n_0 - 3 -n_Z(\pi),
\end{equation*}
for every $Z\subset V(G)$. If $\pi$ consists of exactly one part, then $n_0 =0$, $n'_0 =1$, $e_{G-f-Z}(\pi)=0$,
and clearly the above inequality holds. Thus we may assume that $\pi$ contains at least two parts in the following.

Notice that $e_{G-f-Z}(\pi) \geq e_{G-Z}(\pi) - 1$, and it suffices to show that
\begin{align}\label{eq:0}
e_{G-Z}(\pi) \geq (3-|Z|)n'_o +2n_0 - 2 - n_Z(\pi).
\end{align}

First, (\ref{eq:0}) will hold if $|Z| \geq 3$. The proof is the same as that of Theorem \ref{gzeig} for $k=1$, and thus will be omitted here.
We assume that $|Z|\le 2$ in the remaining proof.

\noindent
{\bf Case 1:} $n'_0 \leq 1$. As $\delta(G) \geq 6$, we have $2e_{G-Z}(\pi) \geq \delta(G-Z)n_0 \geq (6-|Z|)n_0$.
If (\ref{eq:0}) does not hold then
	\begin{align*}
		(3-|Z|)n'_o +2n_0 - 2 - n_Z(\pi) > e_{G-Z}(\pi) \geq \frac{1}{2}(6-|Z|) n_0,
	\end{align*}
which yields
	\begin{align*}
		(6-2|Z|)n'_0 -4 - 2n_Z(\pi) > (2-|Z|) n_0.
	\end{align*}
Given that $n'_0 \leq 1$ and $|Z| \leq 2$, the above inequality holds only when $|Z|=0$, $n_0 =0$ and $n'_0 =1$. However this implies that $\pi$ consists of exactly one part, violating our assumption. Hence (\ref{eq:0}) must hold.

\noindent
{\bf Case 2:} $n'_0 \geq 2$. This case is similar to the proof of Theorem \ref{gzeig}, and thus will be omitted.
\end{proof}

\begin{lemma}
\label{mu2again}
Let $G$ be a graph with minimum degree $\delta \ge 6k$. If $\mu_{2}(G)> 2+ \frac{2}{\delta -1}$,
then $\mu_{2}(G-Z)>\frac{6-2|Z|}{\delta(G-Z) +1}$ for every $Z\subset V(G)$ with $|Z|\le 2$.
\end{lemma}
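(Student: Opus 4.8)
The plan is to mirror almost exactly the argument used for Lemma~\ref{mu2lem}, since this statement is the $k=1$ analogue with the numerator constant $2k-1$ replaced by $2$. Note first that since $\delta \geq 6k \geq 6$, the hypothesis $\mu_2(G) > 2 + \frac{2}{\delta-1}$ is available. The three cases to handle correspond to $|Z| \in \{0,1,2\}$, and the first move is to simplify the target bounds by exploiting how the minimum degree and hence the denominator $\delta(G-Z)+1$ behaves under vertex deletion. Concretely, I would record the elementary inequalities $\delta(G-u) \geq \delta - 1$ for every $u$ and $\delta(G-v-w) \geq \delta - 2$ for every $v,w$. These give $\delta(G-u)+1 \geq \delta$ and $\delta(G-v-w)+1 \geq \delta-1$, so that $\frac{4}{\delta(G-u)+1} \leq \frac{4}{\delta}$ and $\frac{2}{\delta(G-v-w)+1} \leq \frac{2}{\delta-1}$. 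Consequently it suffices to establish the three cleaner bounds $\mu_2(G) > \frac{6}{\delta+1}$, $\mu_2(G-u) > \frac{4}{\delta}$, and $\mu_2(G-v-w) > \frac{2}{\delta-1}$.

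Next I would invoke Fiedler's interlacing inequality (Theorem~\ref{mu2int}), which gives $\mu_2(G-S) \geq \mu_2(G) - |S|$ for any vertex subset $S$. Applied with $S = \{u\}$ and $S = \{v,w\}$, this yields $\mu_2(G-u) \geq \mu_2(G) - 1 > 1 + \frac{2}{\delta-1}$ and $\mu_2(G-v-w) \geq \mu_2(G) - 2 > \frac{2}{\delta-1}$. The last of these already delivers the required bound for the $|Z|=2$ case directly. For the remaining two cases the task reduces to the purely numerical comparisons $2 + \frac{2}{\delta-1} > \frac{6}{\delta+1}$ and $1 + \frac{2}{\delta-1} > \frac{4}{\delta}$, both under the standing assumption $\delta \geq 6$.

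These comparisons are routine to verify. The first is immediate, as its left-hand side exceeds $2$ while its right-hand side is at most $\frac{6}{7} < 1$. For the second, writing $1 + \frac{2}{\delta-1} = \frac{\delta+1}{\delta-1}$ reduces the claim to $\delta(\delta+1) > 4(\delta-1)$, i.e.\ $\delta^2 - 3\delta + 4 > 0$, which holds for all real $\delta$ since its discriminant is negative. Chaining $\mu_2(G) > 2 + \frac{2}{\delta-1} > \frac{6}{\delta+1}$, together with the interlacing bounds and these two comparisons, then establishes all three reduced inequalities and hence the lemma. There is no genuine obstacle here: the only mild care needed is keeping track of which direction the denominator monotonicity pushes the inequalities, so that bounding $\delta(G-Z)+1$ from below correctly translates the deletion-dependent targets into the $\delta$-only targets handled by interlacing.
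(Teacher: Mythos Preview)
Your proposal is correct and follows essentially the same approach as the paper's own proof: both reduce via the minimum-degree bounds $\delta(G-u)\ge \delta-1$ and $\delta(G-v-w)\ge \delta-2$ to the three cleaner inequalities, then handle the $|Z|=1,2$ cases via Fiedler's interlacing (Theorem~\ref{mu2int}) and dispatch the residual numerical comparisons $2+\frac{2}{\delta-1}>\frac{6}{\delta+1}$ and $1+\frac{2}{\delta-1}>\frac{4}{\delta}$ directly. The only difference is that you spell out the verifications of those two comparisons in slightly more detail than the paper does.
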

\begin{proof}
Notice that $\delta(G-u)\ge \delta -1$ for every $u\in V(G)$ and $\delta(G-v-w)\ge \delta -2$ for every $v,w\in V(G)$.
It suffices to show that $\mu_{2}(G)>\frac{6}{\delta+1}, \mu_{2}(G-u)>\frac{4}{\delta}$ and $\mu_{2}(G-v-w)>\frac{2}{\delta -1}$. 
Because $\delta\geq 6k$, it is not hard to verify that $2+ \frac{2}{\delta -1} > \frac{6}{\delta+1}$ and $1+ \frac{2}{\delta -1} > \frac{4}{\delta}$. 
Thus $\mu_{2}(G)> 2+ \frac{2}{\delta -1} > \frac{6}{\delta+1}$. By Theorem~\ref{mu2int}, $\mu_{2}(G-u)\ge \mu_{2}(G) -1 > 1+ \frac{2}{\delta -1} >  \frac{4}{\delta}$ and $\mu_{2}(G-v-w)\ge \mu_{2}(G) -2 > \frac{2}{\delta -1}$.
\end{proof}

\begin{proof}[The proof of Corollary~\ref{glob}]
By Theorem~\ref{l:1} and Lemma~\ref{mu2again}, $G$ is redundantly rigid. Since $\mu_2(G) >2$, the vertex-connectivity of $G$ is at least 3.
Hence $G$ is globally rigid by Theorem~\ref{globthm}.
\end{proof}

\section{Examples}
In this section, we construct a family of graphs to show that the condition ``$\mu_{2}(G)>\frac{5}{\delta(G)+1}$'' in Corollary~\ref{strcor} is essentially best possible. 

\begin{figure}[htb]
\begin{center}
\includegraphics[width=0.7\textwidth]{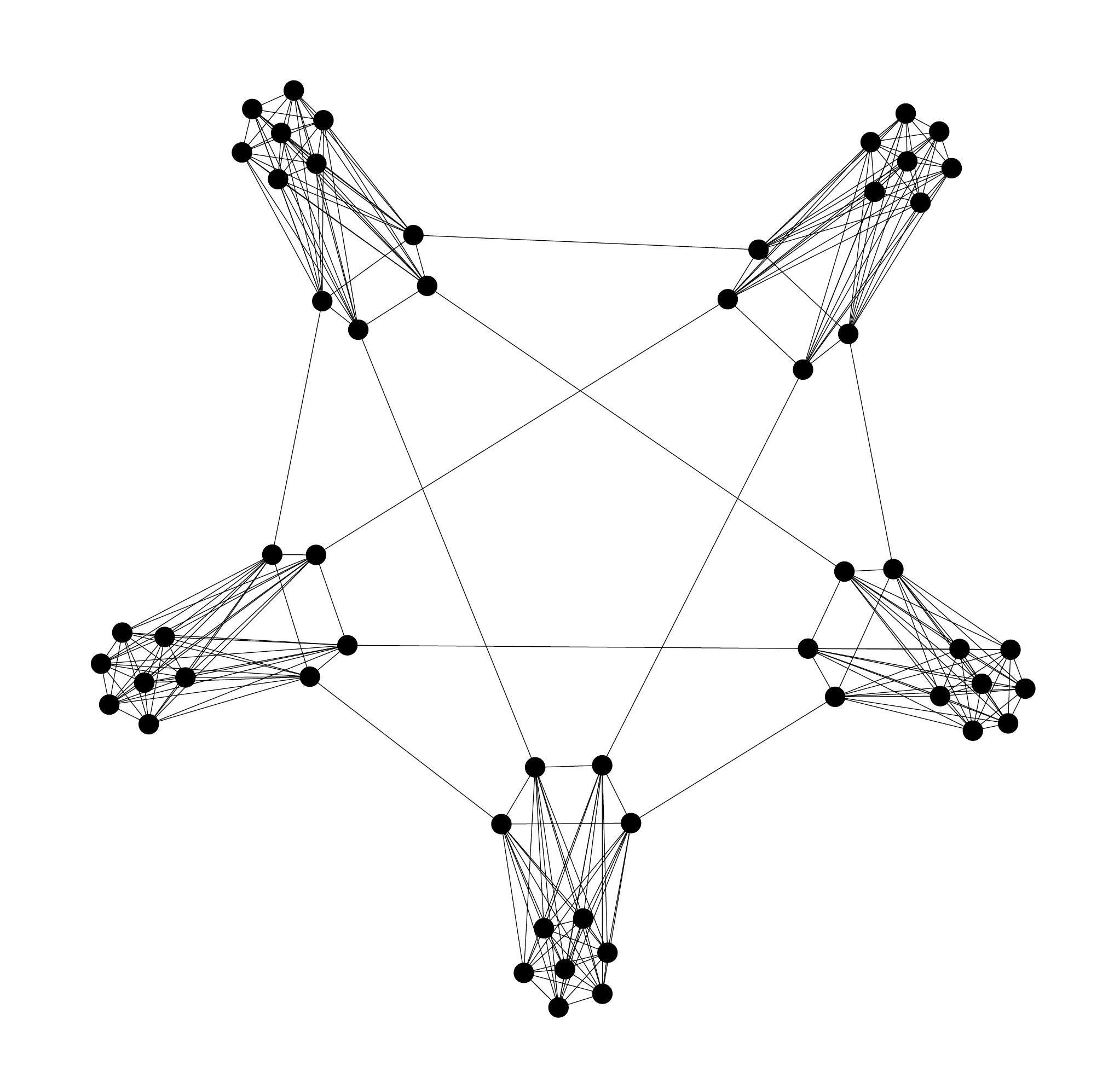}
\caption{An example of $\mathcal{H}_d$ when $d=10$}
\label{h10}
\end{center}
\end{figure}

The family of graphs was initially constructed in \cite{CiWo12}.
Let $d\ge 6$ be an integer and let $H_1, H_2, H_3, H_4, H_5$ be 5 vertex-disjoint copies of a graph obtained
from $K_{d+1}$ by deleting two disjoint edges. Suppose that the deleted edges are $a_i b_i$ and $u_i v_i$
in $H_i$ for $1\le i\le 5$. Let $\mathcal{H}_d$ be the $d$-regular graph whose vertex set is $\bigcup_{i=1}^5 V(H_i)$
and whose edge set is the union of $\bigcup_{i=1}^5 E(H_i)$ with the set
$F=\{b_1a_2, b_2a_3, b_3a_4, b_4a_5, b_5a_1, u_1v_3, u_3v_5, u_5v_2, u_2v_4, u_4v_1\}$.
An example is shown in Figure~\ref{h10} when $d=10$. By the computation in \cite{CiWo12}, 
it follows that $\frac{5}{d+3} < \mu_2(\mathcal{H}_d)\le \frac{5}{d+1}$ for $d\ge 6$.
However, we can show that $\mathcal{H}_d$ is not rigid as below.

Let $X_i = V(H_i)$ for $1\le i\le 5$, and for $6\le i\le 15$, $X_i$ be the vertex set induced by a single edge in $F$.
Let $\mathcal{G} = \{X_i: 1\le i\le 15\}$. Clearly $\{E(X), X\in\mathcal{G}\}$ partitions $E(G)$. Then 
$$\sum_{X\in\mathcal{G}}(2|X|-3) = 5(2(d+1) -3) + 10(2\times 2 -3) =10d +5.$$
Notice that $|V(G)|= 5d + 5$ and it follows that $2|V| -3 = 10d +7$. By Theorem~\ref{LYthm}, $\mathcal{H}_d$ is not rigid.

\section{Concluding remarks}

In this paper, we discovered improved spectral conditions for rigid graphs and globally rigid graphs in $\mathbb{R}^2$ from Laplacian eigenvalues.
Corollaries~\ref{maincor} and \ref{glob} give simple conditions for rigidity and global rigidity, respectively. However we do not know whether they are best possible. Corollary~\ref{glob} might be close to best possible, as a necessary condition for globally rigid graphs in $\mathbb{R}^2$ is 3-connectedness.
It would be interesting to see how large can $\mu_2(G)$ be for non-rigid graphs. Another problem of interest would be obtaining a spectral condition for a graph to contain a spanning $(a,b)$-tight subgraph for other values of $a$ and $b$.

One immediate application of our results is on packing spanning 2-connected subgraphs. Since every rigid graph with at least $3$ vertices is $2$-connected, by Corollary~\ref{kdisrig}, we have the following result on edge-disjoint spanning $2$-connected subgraphs. This result can be seen as a spectral analogue of Jord\'{a}n's combinatorial sufficient condition \cite{Jord05} for packing spanning rigid subgraphs and hence spanning 2-connected subgraphs. It also extends the spectral conditions for vertex-connectivity of \cite{Fiedler, KS06, CiGu16, Gu21}, and the spectral conditions for $k$ edge-disjoint spanning trees of \cite{CiWo12, CiGu16, Gu13, GLLY12, LHGL14, HGLL16}, to $k$ edge-disjoint spanning 2-connected subgraphs.
\begin{proposition}
Let $G$ be a graph with minimum degree $\delta \ge 6k$. If $\mu_{2}(G)> 2+ \frac{2k-1}{\delta -1},$
then $G$ has at least $k$ edge-disjoint spanning 2-connected subgraphs. 
\end{proposition}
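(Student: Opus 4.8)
The plan is to derive this statement directly from Corollary~\ref{kdisrig} together with the elementary structural fact that a rigid graph in $\mathbb{R}^2$ on at least three vertices is necessarily $2$-connected. Since the hypotheses $\delta \ge 6k$ and $\mu_2(G) > 2 + \frac{2k-1}{\delta-1}$ are exactly those of Corollary~\ref{kdisrig}, that corollary immediately furnishes $k$ edge-disjoint spanning rigid subgraphs $G_1,\dots,G_k$ of $G$. Each $G_i$ is spanning, hence has $n := |V(G)|$ vertices, and the minimum-degree hypothesis forces $n \ge \delta + 1 \ge 6k + 1 \ge 7 \ge 3$. Thus, once the structural fact is in hand, each $G_i$ is a spanning $2$-connected subgraph, and the $G_i$ are edge-disjoint by construction, which is precisely the conclusion.

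To justify the structural fact, I would argue that rigidity of a generic framework is incompatible with the presence of a cut vertex when there are at least three vertices. Suppose a rigid generic framework $(H,p)$ had a cut vertex $v$, and let $C_1$ be one connected component of $H-v$ and $C_2$ a second (which exists since $v$ is a cut vertex). Define $q$ by rotating the points $\{\, p(u) : u \in V(C_1)\,\}$ through a small angle about the center $p(v)$ and leaving all other points fixed. Every edge either lies inside $C_1$ (rotated rigidly), lies outside $V(C_1)$ (unmoved), or joins $v$ to a vertex of $C_1$ (its length is the distance to the center of rotation, hence preserved); there are no edges between $C_1$ and any other component of $H-v$. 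Hence $(H,q)$ is equivalent to $(H,p)$ and can be taken arbitrarily close to it, yet the distance between a vertex of $C_1$ and a vertex of $C_2$ changes, so $(H,p)$ is not congruent to $(H,q)$, contradicting rigidity. The disconnected case is handled identically by translating one component. Therefore a rigid graph on at least three vertices is connected and has no cut vertex, hence is $2$-connected. Alternatively, one could cite that every rigid graph contains a spanning minimally rigid (Laman) subgraph and verify $2$-connectivity combinatorially, or argue directly from the partition criterion behind Theorem~\ref{LYthm}.

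There is no serious obstacle here: the entire quantitative content is already packaged in Corollary~\ref{kdisrig}, and the only additional ingredient is the standard observation that rigidity implies $2$-connectivity for graphs with at least three vertices. The one point that genuinely deserves care is that the $2$-connectivity (indeed $3$-connectivity) of the ambient graph $G$, which already follows from $\mu_2(G) > 2$ via Fiedler's bound, says \emph{nothing} about the connectivity of the individual subgraphs $G_i$; it is essential that each $G_i$ inherit $2$-connectivity from its own rigidity rather than from $G$.
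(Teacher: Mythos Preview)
Your proof is correct and follows exactly the paper's approach: the paper simply observes that every rigid graph on at least three vertices is $2$-connected and then invokes Corollary~\ref{kdisrig}. Your write-up is in fact more detailed than the paper's one-line justification, since you supply an explicit argument for the rigidity-implies-$2$-connected fact that the paper merely states.
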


We may also consider eigenvalues of other matrices. The matrix $Q(G)=D(G)+A(G)$ is called the {\bf signless Laplacian matrix} of $G$. 
For $1\leq i\leq n$, we use $\lambda_i :=\lambda_i(G)$ and $q_i :=q_i(G)$ to denote the $i$-th largest eigenvalue of $A(G)$ and $Q(G)$, respectively.
By Courant-Weyl inequalities (on page 29 of \cite{BrHa12}), it is not hard to see that $\mu_{2} + \lambda_2 \ge \delta$ and $\delta + \lambda_2\le q_2$.
Thus all results involving $\mu_2$ in the paper will imply sufficient conditions using $\lambda_2$ and $q_2$. For example, by Corollary~\ref{glob}, it follows that for a graph $G$ with minimum degree $\delta\ge 6$, if $\lambda_2(G) < \delta -2 -\frac{2}{\delta -1}$ (or alternatively, if $q_2(G) < 2\delta -2 -\frac{2}{\delta -1}$), then $G$ is globally rigid. Other results of the paper can be modified in similar ways.

One reason to switch to eigenvalues of other matrices stems from pseudo-random graphs. 
Define $\lambda(G) = \max_{2\le i\le n} |\lambda_i (G)| =\max\{|\lambda_2(G)|, |\lambda_n(G)|\}$.
We call $\lambda(G)$ the second largest absolute eigenvalue of $G$. It is known that a $d$-regular graph on $n$ vertices with {\em small} $\lambda(G)$ (compared to $d$; for example, $\lambda(G)=\Theta(\sqrt{d})$) has edge distribution similar to the random graph of same edge density, namely it is a kind of {\bf pseudo-random graph} (see \cite{KS06} for more details). 
Clearly, the results in this paper imply sufficient conditions for the rigidity and global rigidity of pseudo-random graphs. 

A connected $d$-regular graph $G$ is called a {\bf Ramanujan graph} if $|\lambda_i (G)|\leq 2\sqrt{d-1}$ for all $\lambda_i (G)\neq \pm d$ with $2\le i\le n$.
By definition, if $G$ is a $d$-regular Ramanujan graph, then $\mu_2(G) \geq d - 2\sqrt{d-1}$.
The rigidity of Ramanujan graphs was investigated by Servatius \cite{Serv}. By Corollary~\ref{glob}, we can conclude that any connected $d$-regular Ramanujan graph with $d\geq 8$ is globally rigid in $\mathbb{R}^2$. It is unknown whether this holds for smaller values of $d$, although as all cubic graphs (with the exception of the complete graph on four vertices) are flexible, we know that the lowest value of $d$ where it could hold is either $4,5,6$ or $7$.
We plan to focus on these specific graphs in more detail in future work \cite{CDG2}.

\par\bigskip
\noindent
{\bf Acknowledgments}
\\The authors would like to thank the anonymous referees for reviewing the manuscript and providing insightful comments.
The first author is supported by NSF grants DMS-1600768, CIF-1815922 and a JSPS Invitational Fellowship for Research in Japan S19016.
The second author is supported by Austrian Science Fund (FWF): P31888.
The third author is supported by a grant from the Simons Foundation (522728, XG).

\end{document}